\documentclass[11pt]{article}

\usepackage{graphicx, verbatim}
\usepackage{amssymb}
\usepackage{alltt}
\usepackage{amsmath, amssymb, amsfonts, amscd, xspace, pifont, amsthm}
\usepackage{mathrsfs}
\usepackage{algorithm}
\usepackage{enumitem}   
\newtheorem{theorem}{Theorem}
\newtheorem{lemma}{Lemma}
\newtheorem{proposition}{Proposition}
\newtheorem{corollary}{Corollary}

\RequirePackage[numbers]{natbib}

\newcommand{\V}[1]{\ensuremath{\boldsymbol{#1}}\xspace}


\def\threeImages#1#2#3#4#5#6#7#8#9 
{
	\centerline{\hfill\makebox[#2]{#3}\hfill\makebox[#5]{#6}\hfill\makebox[#8]{#9}\hfill}
	\centerline{\hfill
		\includegraphics[width=#2]{#1}
		\hfill
		\includegraphics[width=#5]{#4}
		\hfill
		\includegraphics[width=#8]{#7}
		\hfill}
}

\def\twoImages#1#2#3#4#5#6 
{
	\centerline{\hfill\makebox[#2]{#3}\hfill\makebox[#5]{#6}\hfill}
	\centerline{\hfill
		\includegraphics[width=#2]{#1}
		\hfill
		\includegraphics[width=#5]{#4}
		\hfill}
}
\theoremstyle{remark}

\textwidth = 6.5 in \textheight = 9 in \oddsidemargin = 0.0 in
\evensidemargin = 0.0 in \topmargin = 0.0 in \headheight = 0.0 in
\headsep = 0.0 in
\parskip = 0.2 in
\parindent = 0.0 in

\begin{document}
	
	\title {An Optimal Uniform Concentration Inequality for Discrete Entropies on Finite Alphabets in the High-dimensional Setting}
	
	\author{
		Yunpeng Zhao \thanks{School of Mathematical and Natural Sciences,
			Arizona State University, AZ, 85306. \texttt{Email:} yunpeng.zhao@asu.edu.} 
	} 
\maketitle
\begin{abstract}
	We prove an exponential decay concentration inequality to bound the tail probability of the difference between the log-likelihood of discrete random variables on a finite alphabet and the negative entropy. The concentration bound we derive holds uniformly over all parameter values. The new result improves the convergence rate in an earlier result of Zhao (2020), from $(K^2\log K)/n=o(1)$ to $ (\log K)^2/n=o(1)$, where $n$ is the sample size and $K$ is the size of the alphabet. We further prove that the rate $(\log K)^2/n=o(1)$ is optimal. The results are extended to misspecified log-likelihoods for grouped random variables. We give applications of the new result in information theory. 
\end{abstract}

{\it Keywords:}  Concentration inequality; log-likelihood; entropy; typical set; source coding theorem; non-convex optimization

\section{Main result}
As a powerful toolset in probability theory \cite{boucheron2013concentration,ledoux2001concentration}, concentration inequalities have wide applications in statistics \cite{wainwright_2019,vershynin2018high}, information theory \cite{raginsky2012concentration}, and algorithm analysis \cite{dubhashi2009concentration}. The information entropy, or just entropy, is one of the central concepts in information theory \cite{cover2006elements}.  The goal of the present paper is to prove a concentration inequality with the optimal rate to bound the tail probability of the difference between the log-likelihood of discrete random variables and the negative entropy  when the number of possible values of the variable grows. In this section, we first state the main result of the paper. We will explain the motivation of this work and review related work in Section \ref{sec:motivation}. 

Let $X$ be a discrete random variable with values in a finite alphabet $\mathcal{X}=\{x_1,...,x_K\}$ and probability mass function $\{p_k=\mathbb{P}(X=x_k) \}_{k=1,...,K}$. 
The entropy of $X$ is defined as 
$
-\sum_{k=1}^K p_k \log p_k.
$
Note that the definition of entropy \footnote{Throughout the paper, ``$\log$'' denotes the natural logarithm and ``$\log_2$'' denotes the logarithm base 2.} does not depend on the possible values of the variable $\{x_1,...,x_K\}$ but only depends the probabilities of taking each value $\{p_1,...,p_K\}$. In fact, $\mathcal{X}$ is not necessary a set of real numbers.  The set can contain symbols such as letters, and is named as alphabet in the information theory literature \cite{cover2006elements}. One can therefore equivalently define entropy  on a categorical variable.  Let $\V{z}=(z_{1},...,z_{K})$ be a dummy coding of a categorical variable with $K$ categories, in which one and only one entry is 1 and the others are 0. Let $\V{p}=(p_1,...,p_K)$ with $p_{k}=\mathbb{P}(z_{k}=1)=\mathbb{P}(X=x_k), k=1,...,K$. The log-likelihood of $\V{z}$ is $L(\V{z})=\sum_{k=1}^K z_{k} \log p_k$ and the negative entropy of $\V{z}$ is $\mathbb{E}[L(\V{z})]=\sum_{k=1}^K p_k \log p_k$.

Given a sequence of independent and identically distributed  random variables $\V{z}_1,...,\V{z}_n$, a natural question is to derive a concentration bound for the difference between the mean of  log-likelihoods of $\V{z}_1,...,\V{z}_n$ and its expectation, i.e., the negative entropy.  We consider a slightly more general setting, in which the variables are assumed to be independent but not necessarily identical. Specifically, let $\V{z}_i=(z_{i1},...,z_{iK})$ follow a categorical distribution with parameters $\V{p}_i$ where $p_{ik}=\mathbb{P}(z_{ik}=1),i=1,...,n, k=1,...,K$. We assume   $\V{z}_1,...,\V{z}_n$ are  independent but $\V{p}_i$ can be different for each $\V{z}_i$.

We are interested in deriving an exponential decay concentration bound, which holds \textit{uniformly over parameter values} $\{\V{p}_i\}$, for the tail probability of $$  \frac{1}{n} \sum_{i=1}^n \left ( \sum_{k=1}^K   z_{ik} \log p_{ik} - \sum_{k=1}^K   p_{ik} \log p_{ik} \right ).$$ Specifically, let  $\mathcal{C}=\{\V{q}=(q_1,...,q_K): 0\leq q_k \leq 1, k=1,...,K, \sum_{k=1}^K q_k =1  \}$. We aim to derive a bound for
\begin{align}\label{main_interest}
\sup_{\V{p}_1\in \mathcal{C},...,\V{p}_n\in \mathcal{C}} \mathbb{P}\left ( \left | \frac{1}{n} \sum_{i=1}^n \left ( \sum_{k=1}^K   z_{ik} \log p_{ik} - \sum_{k=1}^K   p_{ik} \log p_{ik} \right ) \right | \geq  \epsilon \right ), 
\end{align}
where  ``sup'' is understood as taking the supremum over all possible values of $\V{p}_1,...,\V{p}_n$ in $\mathcal{C}^n$,  not the maximum of $n$ values. 

We now give the main theorem. 
\begin{theorem}[Main result]\label{thm:main}
	For  sufficiently small positive $\epsilon$ and $K\geq 2$, 
	\begin{align}
	\sup_{\V{p}_1\in \mathcal{C},...,\V{p}_n\in \mathcal{C}}\mathbb{P}\left ( \left | \frac{1}{n} \sum_{i=1}^n \left ( \sum_{k=1}^K   z_{ik} \log p_{ik} - \sum_{k=1}^K   p_{ik} \log p_{ik} \right ) \right | \geq  \epsilon \right )
	\leq   2 \exp \left ( -\frac{n\epsilon^2}{4 (\max\{\log K, \log 5\} )^2} \right ). \label{main1}
	\end{align}
	Furthermore, if $(\log K)^2 /n =\Omega(1)$, for all $\epsilon>0$,
	\begin{align}
	\sup_{\V{p}_1\in \mathcal{C},...,\V{p}_n\in \mathcal{C}} \mathbb{P}\left ( \left | \frac{1}{n} \sum_{i=1}^n \left ( \sum_{k=1}^K   z_{ik} \log p_{ik} - \sum_{k=1}^K   p_{ik} \log p_{ik} \right ) \right | \geq  \epsilon \right ) 
	\not \rightarrow 0. \label{main2}
	\end{align}
\end{theorem}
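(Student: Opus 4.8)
I would prove the two assertions by separate methods.

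For the exponential bound \eqref{main1} the plan is a Chernoff/moment-generating-function argument. Write $W_i=\sum_k z_{ik}\log p_{ik}-\sum_k p_{ik}\log p_{ik}$: the $W_i$ are independent with mean $0$, and since $\V{z}_i$ is a dummy code exactly one coordinate equals $1$, so $W_i=\log p_{i\kappa_i}+H_i$, where $\kappa_i$ is the (random) index of that coordinate and $H_i=-\sum_k p_{ik}\log p_{ik}$ is the entropy. Conditioning on $\kappa_i$ gives the identity $\mathbb{E}[e^{\lambda W_i}]=e^{\lambda H_i}\sum_k p_{ik}^{\,1+\lambda}$, so $\psi_i(\lambda):=\log\mathbb{E}[e^{\lambda W_i}]$ satisfies $\psi_i(0)=\psi_i'(0)=0$, while $\psi_i''(\lambda)$ is the variance of $-\log p_{i\kappa}$ when $\kappa$ is drawn from the tilted distribution $\mu_\lambda(k)\propto p_{ik}^{\,1+\lambda}$ — a ``tilted varentropy''. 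The key step will be a \emph{uniform} bound $\psi_i''(\lambda)\le 2c^2$ with $c:=\max\{\log K,\log 5\}$, valid for every $\V{p}_i\in\mathcal{C}$ and every $\lambda$ in a fixed interval around $0$ that stays away from $-1$; Taylor's formula then gives $\psi_i(\lambda)\le c^2\lambda^2$ on that interval. Independence yields $\mathbb{P}(\tfrac1n\sum_i W_i\ge\epsilon)\le\exp(-n\lambda\epsilon+nc^2\lambda^2)$, which at $\lambda=\epsilon/(2c^2)$ becomes $\exp(-n\epsilon^2/(4c^2))$; the same reasoning applied to $-W_i$, plus a union bound, gives the factor $2$. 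The hypothesis that $\epsilon$ be ``sufficiently small'' is exactly what keeps the optimal $\lambda=\pm\epsilon/(2c^2)$ inside the admissible interval: for the lower tail of $\tfrac1n\sum_i W_i$ this $\lambda$ is negative, and $W_i$ is only sub-Gaussian near its mean — landing on a symbol of tiny probability makes $W_i$ very negative, a sub-exponential rather than Gaussian far-tail event.

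The main obstacle is the uniform bound on $\psi_i''$. For $\lambda\ge 0$ the tilt $\mu_\lambda$ merely sharpens $\V{p}_i$ and $\psi_i''(\lambda)$ is essentially the varentropy $\sum_k p_{ik}(\log p_{ik})^2-H_i^2$, so one needs the extremal fact that the varentropy over the entire simplex $\mathcal{C}$ is $O((\log K)^2)$, the worst case being (essentially) a two-level distribution with a constant fraction of the mass on one symbol and the rest spread equally; the ``$\log 5$'' floor enters only to keep $c^2$ from being vacuous for very small $K$, where $\log K$ can fall below $1$. For $\lambda<0$ the tilt flattens $\V{p}_i$ toward the uniform law and $\psi_i''(\lambda)$ genuinely diverges as $\lambda\downarrow-1$, so one must show it still stays below $2c^2$ on a fixed interval $[-\lambda_0,0]$; equivalently, and perhaps more cleanly, one bounds the left tail of $W_i$ using $\mathbb{P}(-\log p_{i\kappa_i}\ge t)\le Ke^{-t}$ together with an elementary estimate such as $e^x\le 1+x+x^2$ for $x\le 1$. (A further equivalent phrasing writes $\psi_i(\lambda)=\lambda\bigl(H_i-R_{1+\lambda}(\V{p}_i)\bigr)$ with $R_{1+\lambda}$ the R\'enyi entropy and asks for the Lipschitz-type estimate $|H_i-R_{1+\lambda}(\V{p}_i)|\le c^2|\lambda|$.)

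For the optimality statement \eqref{main2} I would exhibit an explicit worst case: take every $\V{p}_i$ equal to $\V{p}=\bigl(\tfrac12,\tfrac1{2(K-1)},\dots,\tfrac1{2(K-1)}\bigr)$. Then $H_i=\log 2+\tfrac12\log(K-1)$, and a one-line computation shows $W_i$ takes only the two values $\pm\tfrac12\log(K-1)$, each with probability $\tfrac12$. Hence $\tfrac1n\sum_{i=1}^n W_i=\tfrac12\log(K-1)\cdot\tfrac1n\sum_{i=1}^n\sigma_i$ with $\sigma_1,\dots,\sigma_n$ i.i.d.\ Rademacher, and therefore
\[
\sup_{\V{p}_1,\dots,\V{p}_n\in\mathcal{C}}\mathbb{P}\!\left(\Bigl|\tfrac1n\sum_{i=1}^n W_i\Bigr|\ge\epsilon\right)\ \ge\ \mathbb{P}\!\left(\Bigl|\tfrac1{\sqrt n}\sum_{i=1}^n\sigma_i\Bigr|\ \ge\ \frac{2\sqrt n\,\epsilon}{\log(K-1)}\right).
\]
When $(\log K)^2/n=\Omega(1)$ one has $n\to\infty$, hence $K\to\infty$, and the threshold $2\sqrt n\,\epsilon/\log(K-1)$ stays bounded along the relevant subsequence; the central limit theorem (or, for an explicit constant, the Paley--Zygmund inequality together with $\mathbb{E}[(\sum_i\sigma_i)^4]\le 3n^2$) then bounds the right-hand side below by a fixed positive number, so the supremum cannot tend to $0$. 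I do not anticipate any real difficulty in this last part beyond routine bookkeeping — comparing $\log(K-1)$ with $\log K$ and checking that $K\to\infty$ along the subsequence.
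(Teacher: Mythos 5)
Your overall architecture is the same as the paper's: Chernoff/MGF for the exponential bound, plus the explicit two-level example with the $\tfrac12\log(K-1)$ Rademacher reduction and a normal-approximation argument (Berry--Esseen in the paper, CLT or Paley--Zygmund in your version) for the non-convergence statement. The optimality part of your proposal is essentially identical to the paper's Theorem~4 and is fine.

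The gap is in the exponential bound, and it is the crux of the whole paper. You correctly identify the hard case, $\lambda<0$, and correctly observe that the cumulant generating function $\psi_i$ behaves sub-exponentially there; but you do not actually establish the uniform bound $\psi_i(\lambda)\le c^2\lambda^2$ (equivalently, your $\psi_i''(\lambda)\le 2c^2$) on a $\V{p}$-independent interval of negative $\lambda$. Your ``tilted varentropy'' / R\'enyi--Lipschitz reformulation is an attractive way to phrase what is needed, but stating it is not proving it, and it is precisely where the paper invests its main technical effort: Lemma~\ref{thm:opt2} bounds $F(\lambda,\V{p})=\sum_k p_k^{1+\lambda}-1-\lambda\sum_k p_k\log p_k$ over the simplex by exhibiting $(1/K,\dots,1/K)$ as the global maximizer via a Lagrangian dual certificate, with the extremality at $1/K$ verified by second-derivative sign analysis and the tedious comparison $f(1/K)\ge f(0),f(1)$ (Lemma~A.1). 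That optimization step, valid for $\lambda\ge \frac{2-2/K-\log K}{(1-1/K)\log K}$, is what rules out the skewed distributions you worry about and yields $\psi_i(\lambda)\le e^{-\lambda\log K}-1+\lambda\log K\le (\log K)^2\lambda^2$. Moreover, the one concrete alternative you offer for the left tail --- the crude tail bound $\mathbb{P}(-\log p_{i\kappa}\ge t)\le Ke^{-t}$ --- cannot reach the stated rate: integrating it into the MGF brings in an additive $\log K$ (rather than a multiplicative $\lambda^2$) term and is in essence the mechanism behind Theorem~\ref{thm:old}, which only gives $(K^2\log K)/n=o(1)$. A smaller point: for $\lambda\ge 0$ the paper controls the \emph{second moment} $\sum_k p_k(\log p_k)^2$ (Lemma~\ref{control_var}), whose maximizer over $\mathcal{C}$ is the uniform distribution, not the two-level distribution you describe; the latter maximizes the centered varentropy, which is a different and smaller quantity. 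None of this invalidates your plan, but as written it leaves the central lemma unproved.
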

Most results in the paper in fact hold for $K=1$. The corresponding log-likelihood is however non-random so we exclude this trivial case. 

We comment on the contributions of the main theorem before proceeding. Firstly, as aforementioned, inequality \eqref{main1} is uniform over parameter values as the right hand side does not depend on $\V{p}_1,...,\V{p}_n$. Note that we do not assume $\V{p}_1,...,\V{p}_n$ are bounded away\footnote{We use the convention $0 \log 0=0$, which is consistent with the limit $\lim_{q \rightarrow 0} q \log q=0$.} from the boundaries.  Removing this restriction is a challenge and a significant contribution of this paper. 
Secondly, inequality \eqref{main1} implies that if $(\log K)^2/n =o(1)$, for all $\epsilon>0$, 
\begin{align}
\sup_{\V{p}_1\in \mathcal{C},...,\V{p}_n\in \mathcal{C}} \mathbb{P}\left ( \left | \frac{1}{n} \sum_{i=1}^n \left ( \sum_{k=1}^K   z_{ik} \log p_{ik} - \sum_{k=1}^K   p_{ik} \log p_{ik} \right ) \right | \geq  \epsilon \right ) \label{main3}
\rightarrow 0.
\end{align}
The rate $(\log K)^2 /n=o(1)$ falls into the high-dimensional setting -- that is, the number of parameters can grow much faster than the sample size. Thirdly, \eqref{main2} and \eqref{main3} imply that the rate $(\log K)^2 /n= o(1)$ is optimal in the asymptotic sense. 

\section{Motivation and related work} \label{sec:motivation}
Consider the most classical case where $K$ is fixed and  $\V{z}_1,...,\V{z}_n$ are independently and identically distributed (i.i.d.) with $\V{p}_1=\cdots=\V{p}_n=\V{p}$.  By the law of large numbers, 
\begin{align}\label{aep}
\frac{1}{n} \sum_{i=1}^n  \sum_{k=1}^K   z_{ik} \log p_{k}  \stackrel{p}{\rightarrow}  \sum_{k=1}^K   p_{k} \log p_{k}, \,\, \mbox{as } n \rightarrow \infty.
\end{align}
This  result, called the asymptotic equipartition property (AEP), is one of the most classical results in information theory \cite{shannon1948mathematical}. The AEP has been generalized to stationary ergodic processes  \cite{mcmillan1953basic,breiman1957individual} and is referred to as the Shannon-McMillan-Breiman theorem. 

We improve the AEP for independent variables from a different perspective. We aim to prove a  non-asymptotic concentration inequality, more specifically, an exponential decay bound, for the tail probability. The study of exponential decay concentration inequalities for sums of binary variables dates back to at least the 1920s \cite{Bernstein1920}. The Chernoff-Hoeffding theorem \cite{Hoeffding63} gives the sharpest bound that can be derived by the Chernoff bound technique for sums of independent Bernoulli variables. Bernstein's inequality and Hoeffding's inequality \cite{Hoeffding63}  give concentration bounds with more tractable forms and can be generalized to bounded variables and more general settings, such as sub-Gaussian variables and sub-exponential variables. 

Most of these studies focused on tail bounds for sums of  variables. There is a lack of research on concentration inequalities for \textit{log-likelihoods}, i.e., sums weighted by  logarithms of the parameters. Uniform bounds that are independent of parameter values are particularly under-explored, despite their applications in  statistics \cite{choi2012stochastic,paul2016consistent,zhao2020identifiability}. 

We first discuss the difficulty in classical results when applied to log-likelihoods and then explain the motivation of the present research. For $K=2$, $z_{i2}=1-z_{i1}$ and $p_{i2}=1-p_{i1}$,  $i=1,...,n$. Then 
$\sum_{k=1}^2  z_{ik} \log p_{ik} - \sum_{k=1}^2  p_{ik} \log p_{ik}=(z_{i1}-p_{i1})\log \frac{p_{i1}}{1-p_{i1}}$. Assume $|\log \frac{p_{i1}}{1-p_{i1}}|\leq M, i=1,...,n$. By Bernstein's inequality (see \cite{dubhashi2009concentration}, Theorem 1.2),
for all $\epsilon>0$, 
\begin{align}
\mathbb{P}\left ( \left | \frac{1}{n} \sum_{i=1}^n \left ( \sum_{k=1}^2  z_{ik} \log p_{ik} - \sum_{k=1}^2  p_{ik} \log p_{ik} \right ) \right | \geq  \epsilon \right ) \leq 2 \exp \left \{ -\frac{n^2 \epsilon^2/2 }{ \sum_{i=1}^n \textnormal{Var}(z_{i1}\log \frac{p_{i1}}{1-p_{i1}}) + M n \epsilon/3}    \right \}. \label{Bern}
\end{align}
The reader is referred to \cite{choi2012stochastic} for an application of \eqref{Bern} to community detection in networks. A drawback of \eqref{Bern} is that the condition  $|\log \frac{p_{i1}}{1-p_{i1}}|\leq M$ requires $p_{i1}$ to be bounded away from 0 and 1. Otherwise, the bound can become trivial if $M$ grows with $n$ too fast. One may apply alternative forms of Bernstein's inequality (for example, Theorem 2.8.2 in \cite{vershynin2018high}) or other commonly-used concentration inequalities to log-likelihoods and faces a similar problem. 

The essential problem is that $\{\log p_{ik} \}$ should not be treated as an arbitrary set of coefficients because $p_{ik}$ is also a part of the model that controls the probabilistic behavior of $z_{ik}$. 
To the best of our knowledge, Zhao \cite{zhao2020note} first overcame this technical difficulty. The paper removed the constraint $|\log \frac{p_{i1}}{1-p_{i1}}|\leq M$ and proved a new Bernstein-type bound that does not depend on $\{\V{p}_{i}\}$: 
\begin{theorem}[\cite{zhao2020note}, Corollary 1]\label{thm:old}
	For  $K\geq 2$ and  $\epsilon>0$,
	\begin{align*}
	\sup_{\V{p}_1\in \mathcal{C},...,\V{p}_n\in \mathcal{C}} \mathbb{P}\left ( \left | \frac{1}{n} \sum_{i=1}^n \left ( \sum_{k=1}^K   z_{ik} \log p_{ik} - \sum_{k=1}^K   p_{ik} \log p_{ik} \right ) \right | \geq  \epsilon \right )  \leq 2K \exp \left \{ -\frac{n \epsilon^2}{2K(K+\epsilon)} \right \}.
	\end{align*}
\end{theorem}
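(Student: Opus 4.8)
\textbf{Proof proposal for Theorem~\ref{thm:old}.}

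The plan is to use the Chernoff bounding method applied separately to each coordinate $k$, exploiting the fact that the log-likelihood term decomposes as a sum over $k$ of quantities each controlled by a single probability $p_{ik}$. First I would bound the one-sided deviation and then apply a union bound over the two signs and over the $K$ coordinates. Fix $k$ and consider the centered summands $Y_{ik} = z_{ik}\log p_{ik} - p_{ik}\log p_{ik} = (z_{ik}-p_{ik})\log p_{ik}$. For the Chernoff step I need a uniform bound on the moment generating function $\mathbb{E}[e^{t Y_{ik}}]$ that does not involve $p_{ik}$. Since $z_{ik}\in\{0,1\}$ with mean $p_{ik}$, the variable $Y_{ik}$ takes the value $(1-p_{ik})\log p_{ik}$ with probability $p_{ik}$ and $-p_{ik}\log p_{ik}$ with probability $1-p_{ik}$. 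The key elementary inequality I would establish is that, for $t$ in a suitable range, $\log \mathbb{E}[e^{t Y_{ik}}] \le c\, t^2$ with a constant $c$ that is bounded uniformly in $p_{ik}$ — this is where the fact that $p_{ik}$ simultaneously controls the weight $\log p_{ik}$ and the Bernoulli variance $p_{ik}(1-p_{ik})$ is essential, since $p_{ik}(1-p_{ik})(\log p_{ik})^2$ stays bounded as $p_{ik}\to 0$. The precise form should yield, after summing over $i$ and optimizing $t$, a sub-gamma / Bernstein-type tail with the $K$-dependence quoted.

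The main steps, in order, would be: (i) reduce \eqref{main_interest} to a one-coordinate, one-sided problem via $\mathbb{P}(|\frac1n\sum_i\sum_k Y_{ik}|\ge\epsilon) \le \sum_{k=1}^K \mathbb{P}(|\frac1n\sum_i Y_{ik}|\ge \epsilon/K)$ (or a variant of this splitting), noting that each $\frac1n\sum_i Y_{ik}$ has mean zero; (ii) for fixed $k$ and fixed sign, apply the Chernoff bound $\mathbb{P}(\frac1n\sum_i Y_{ik}\ge \delta) \le \exp(-n t\delta + \sum_i \log\mathbb{E}[e^{tY_{ik}}])$; (iii) prove the uniform MGF bound $\sup_{0\le p\le 1}\log\mathbb{E}[e^{tY_{ik}}] \le g(t)$ for an explicit $g$ that behaves like a sub-gamma cumulant generating function, i.e.\ $g(t)\lesssim \frac{v t^2}{1 - bt}$ for $t\in[0,1/b)$ with $v,b$ absolute constants times the relevant $K$-scaling; (iv) optimize over $t$ to obtain $\exp\{-n\delta^2/(2(v + b\delta))\}$; (v) substitute $\delta = \epsilon/K$ and collect the factor $2K$ from the union bound to land on the stated inequality $2K\exp\{-n\epsilon^2/(2K(K+\epsilon))\}$, checking that the constants $v,b$ coming out of step (iii) are compatible with the $v\asymp K$, $b\asymp K$ scaling implicit in the denominator $2K(K+\epsilon)$.

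The hard part will be step (iii): establishing the $p$-free bound on $\log\mathbb{E}[e^{tY_{ik}}]$. The obstacle is the boundary behavior as $p_{ik}\to 0$, where $\log p_{ik}\to -\infty$; a naive Hoeffding-type bound using the range of $Y_{ik}$ fails because the range is unbounded. The resolution is to work with the variance and higher moments directly: $\mathrm{Var}(Y_{ik}) = p_{ik}(1-p_{ik})(\log p_{ik})^2$ is uniformly bounded (its supremum over $p\in(0,1]$ is an absolute constant), and more generally the $m$-th absolute central moment of $Y_{ik}$ is controlled by $p_{ik}(\log p_{ik})^m$ up to combinatorial factors, which again stays bounded. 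Feeding these moment bounds into the standard series expansion $\log\mathbb{E}[e^{tY_{ik}}] \le \sum_{m\ge 2} \frac{t^m}{m!}\mathbb{E}|Y_{ik}|^m$ (valid after centering, using $\log(1+x)\le x$) and summing the geometric-type series gives the required sub-gamma control. I would also need to double-check the direction $z_{ik}=0$ versus $z_{ik}=1$ contributes symmetrically enough, and handle the $k$ with $p_{ik}$ close to $1$ (where $\log p_{ik}$ is small, so those terms are harmless). Once the uniform cumulant bound is in hand, steps (ii), (iv), (v) are routine Chernoff-bound bookkeeping, and the union-bound losses of $K$ (over coordinates) and $2$ (over signs) account exactly for the $2K$ prefactor and the $K$ in $2K(K+\epsilon)$.
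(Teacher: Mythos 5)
The paper does not actually prove this statement: it is quoted as Theorem~\ref{thm:old} directly from the cited reference (\cite{zhao2020note}, Corollary~1) and used purely as the baseline the main theorem improves upon, so there is no in-paper proof to compare your attempt against. Judged on its own merits, your outline is the natural one and almost certainly the one the cited note takes. The form of the bound itself forces it: the prefactor $2K$ is a union bound over $K$ coordinates and two signs, and the denominator $2K(K+\epsilon)=2K^2\bigl(1+\epsilon/K\bigr)$ is exactly what a Bernstein bound $\exp\{-n\delta^2/(2(v+b\delta))\}$ with $v=b=1$ produces after substituting the per-coordinate deviation $\delta=\epsilon/K$. Your reduction $|\sum_k a_k|\ge\epsilon \Rightarrow \max_k|a_k|\ge\epsilon/K$ is correct, and your identification of the crux, a $p$-free sub-gamma bound on the cumulant generating function of $(z_{ik}-p_{ik})\log p_{ik}$, is also correct and is precisely the point the paper emphasizes (that $p_{ik}$ controls both the weight $\log p_{ik}$ and the Bernoulli variance simultaneously).

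One small technical remark on step~(iii), where you propose going through absolute central moments $\mathbb{E}|Y_{ik}|^m$. A cleaner route that produces the required constants $v=b=1$ without slack is to write $\log\mathbb{E}[e^{tY_{ik}}]=-tp\log p+\log\bigl(1+(p^{t+1}-p)\bigr)\le p\bigl(e^{t\log p}-1-t\log p\bigr)$ via $\log(1+x)\le x$, expand $e^x-1-x=\sum_{m\ge2}x^m/m!$ with $x=t\log p$, and use $\sup_{p\in(0,1]}p|\log p|^m=(m/e)^m$ termwise. Since $\frac{(m/e)^m}{m!}\le 2e^{-2}\approx 0.271<\tfrac12$ for all $m\ge2$, this gives, for $|t|<1$,
\begin{align*}
\sup_{p}\log\mathbb{E}\bigl[e^{tY_{ik}}\bigr]\;\le\;\sum_{m\ge2}|t|^m\frac{(m/e)^m}{m!}\;\le\;\frac{t^2}{2(1-|t|)},
\end{align*}
which is exactly the sub-gamma cumulant bound with $v=b=1$ needed to land on $2K\exp\{-n\epsilon^2/(2K(K+\epsilon))\}$. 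Your moment route $\mathbb{E}|Y_{ik}|^m\le 2\,p|\log p|^m$ carries an avoidable factor of two and could leave you a constant shy of the stated inequality, so the cumulant-first derivation is the one to use when you fill in the details. Everything else in your plan (Chernoff, optimizing $t$, union-bound bookkeeping) is routine and correct.
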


The above theorem implies that if $ (K^2\log K)/n \rightarrow 0$, for all $\epsilon$,
\begin{align*}
\sup_{\V{p}_1\in \mathcal{C},...,\V{p}_n\in \mathcal{C}} \mathbb{P}\left ( \left | \frac{1}{n} \sum_{i=1}^n \left ( \sum_{k=1}^K   z_{ik} \log p_{ik} - \sum_{k=1}^K   p_{ik} \log p_{ik} \right ) \right | \geq  \epsilon \right )
\rightarrow 0.
\end{align*} 
Our goal is to improve the above rate. We approach this problem by first considering an elementary probability inequality -- Chebyshev's inequality: 
\begin{align}
\mathbb{P}\left ( \left | \frac{1}{n} \sum_{i=1}^n \left ( \sum_{k=1}^K   z_{ik} \log p_{ik} - \sum_{k=1}^K   p_{ik} \log p_{ik} \right ) \right | \geq  \epsilon \right ) 
\leq  \frac{\sum_{i=1}^n\textnormal{Var}( L(\V{z}_i))}{n^2\epsilon^2}, \label{cheby}
\end{align}
where 
\begin{align*}
\textnormal{Var}( L(\V{z}_i))=\sum_{k=1}^K p_{ik} (\log p_{ik})^2- \left ( \sum_{k=1}^K p_{ik} \log p_{ik} \right )^2 \leq \sum_{k=1}^K p_{ik} (\log p_{ik})^2 .
\end{align*}
By taking the first and the second derivatives of $p_{ik}(\log p_{ik})^2$, one can easily prove $\max_{p_{ik}\in [0,1]} p_{ik}(\log p_{ik})^2 =4 e^{-2}$.
We can therefore give a rough estimate of the right hand side of \eqref{cheby}: 
\begin{align*}
\frac{\sum_{i=1}^n \textnormal{Var}( L(\V{z}_i))}{n^2\epsilon^2} \leq \frac{4K}{n\epsilon^2 e^2 }.
\end{align*}
First note that the above bound is independent of $\{\V{p}_{i}\}$, which is in line with the observation in \cite{zhao2020note}. Moreover, the bound implies that  if $ K =o(n)$, for all $\epsilon>0$,
\begin{align*}
\sup_{\V{p}_1\in \mathcal{C},...,\V{p}_n\in \mathcal{C}} \mathbb{P}\left ( \left | \frac{1}{n} \sum_{i=1}^n \left ( \sum_{k=1}^K   z_{ik} \log p_{ik} - \sum_{k=1}^K   p_{ik} \log p_{ik} \right ) \right | \geq  \epsilon \right )
\rightarrow 0,
\end{align*} 
which clearly suggests that there is room for improvement in Theorem 2 when $K$ is large. 

The above estimate of $\textnormal{Var}( L(\V{z}_i))$ is rough because it ignores the constraint $\sum_{k=1}^K p_{ik}=1$. In Section \ref{sec:main}, we will show that the correct order of $\textnormal{Var}( L(\V{z}_i))$ is  $(\log K)^2$. 

The rest of the paper is organized as follows. In Section \ref{sec:fixed}, we prove the concentration inequalities for fixed parameters $\V{p}_1,...,\V{p}_n$ by classical techniques. In Section \ref{sec:main}, we elaborate the main theorem and prove it through a series of lemmas and theorems. Our approach relies on bounding certain moment generating functions (MGFs).  To bound the  MGFs, we borrow the idea of primal and dual from the literature of optimization.  In Section \ref{sec:extension}, we extend the results to misspecified log-likelihoods for grouped random variables. In Section \ref{sec:application}, we give two examples of the applications of the new result in information theory:  a refined explicit bound for $n$ in Shannon's source coding theorem and the error exponent for  source coding  with growing $K$. 

\section{Inequalities for fixed parameters} \label{sec:fixed}
We prove concentration inequalities for fixed $\V{p}_1,...,\V{p}_n$ in this section, where each $\V{p}_i$ is an interior point of $\mathcal{C}$. The proofs are not challenging. But the results do not exist in the literature as the form we present below, to the best of our knowledge. So we include them for completeness. Moreover, the proofs shed light on the asymmetry of the two sides of the bound and the challenge in proving the uniform bound.  

Let $Y_i=\sum_{k=1}^K   z_{ik} \log p_{ik} - \sum_{k=1}^K   p_{ik} \log p_{ik}$. Let $M_Y(\lambda,\V{p}_i)=\mathbb{E}[e^{\lambda Y_i}]$ be the MGF\footnote{Note that the MGF is well defined on the boundaries of $\mathcal{C}$ for $\lambda>-1$. Specifically, $0^{\lambda+1}=0$ for $\lambda>-1$, which is continuous at 0 because $\lim_{p\rightarrow 0} p^{\lambda+1}=0$ for $\lambda>-1$.} of $Y_i$. 
\begin{theorem}[Right-tail bound for fixed parameters]\label{thm:positive}
	For $K\geq 2$ and $\lambda>0$,\begin{align}
	M_Y(\lambda,\V{p}_i)  \leq \exp \left (  \lambda^2 \sum_{k=1}^K p_{ik} (\log p_{ik})^2 \right ), \,\, i=1,...,n. \label{right_mgf}
	\end{align}
	For $K\geq 2$ and $\epsilon>0$,
	\begin{align*}	
	\mathbb{P}\left ( \frac{1}{n} \sum_{i=1}^n \left ( \sum_{k=1}^K   z_{ik} \log p_{ik} - \sum_{k=1}^K   p_{ik} \log p_{ik} \right ) \geq  \epsilon \right ) \leq \exp \left ( -\frac{n^2 \epsilon^2}{4 \sum_{i=1}^n \sum_{k=1}^K p_{ik} (\log p_{ik})^2 } \right ).
	\end{align*}	
\end{theorem}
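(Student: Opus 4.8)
The plan is to compute the moment generating function explicitly, control it for $\lambda>0$ with an elementary quadratic bound on the exponential, and then run a standard Chernoff argument. Since each $\V{z}_i$ is a dummy encoding with exactly one nonzero coordinate, $\sum_{k=1}^K z_{ik}\log p_{ik}=\log p_{iJ}$ where $J$ is the random index of the nonzero entry, with $\mathbb{P}(J=k)=p_{ik}$. Writing $\mu_i=\sum_{k=1}^K p_{ik}\log p_{ik}$, this gives the closed form
\begin{align*}
M_Y(\lambda,\V{p}_i)=\mathbb{E}[e^{\lambda Y_i}]=e^{-\lambda\mu_i}\sum_{k=1}^K p_{ik}\, p_{ik}^{\lambda}=e^{-\lambda\mu_i}\sum_{k=1}^K p_{ik}^{1+\lambda},
\end{align*}
which also makes transparent the footnote's remark about well-definedness for $\lambda>-1$.

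Next I would bound $\sum_{k=1}^K p_{ik}^{1+\lambda}$ when $\lambda>0$. The one ingredient is the elementary inequality $e^x\le 1+x+\tfrac{x^2}{2}$ valid for all $x\le 0$ (equality at $x=0$; the difference has nonnegative second derivative on $(-\infty,0]$, hence is nonincreasing there after differentiating once). Applying it with $x=\lambda\log p_{ik}\le 0$ yields $p_{ik}^{1+\lambda}=p_{ik}e^{\lambda\log p_{ik}}\le p_{ik}\bigl(1+\lambda\log p_{ik}+\tfrac{\lambda^2}{2}(\log p_{ik})^2\bigr)$. Summing over $k$ and using $\sum_k p_{ik}=1$ gives $\sum_{k=1}^K p_{ik}^{1+\lambda}\le 1+\lambda\mu_i+\tfrac{\lambda^2}{2}\sum_{k=1}^K p_{ik}(\log p_{ik})^2$; taking logarithms, using $\log(1+t)\le t$, and adding back $-\lambda\mu_i$ makes the $\lambda\mu_i$ terms cancel, leaving $\log M_Y(\lambda,\V{p}_i)\le \tfrac{\lambda^2}{2}\sum_{k=1}^K p_{ik}(\log p_{ik})^2$. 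This is even slightly stronger than \eqref{right_mgf}, which follows a fortiori.

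For the tail bound, by independence and Markov's inequality (the Chernoff step), for any $\lambda>0$,
\begin{align*}
\mathbb{P}\Bigl(\tfrac1n\textstyle\sum_{i=1}^n Y_i\ge\epsilon\Bigr)\le e^{-n\lambda\epsilon}\prod_{i=1}^n M_Y(\lambda,\V{p}_i)\le \exp\bigl(-n\lambda\epsilon+\lambda^2 V\bigr),\qquad V:=\sum_{i=1}^n\sum_{k=1}^K p_{ik}(\log p_{ik})^2.
\end{align*}
Since each $\V{p}_i$ is an interior point of $\mathcal{C}$ and $K\ge 2$, at least one coordinate satisfies $0<p_{ik}<1$, so $V>0$; optimizing over $\lambda>0$ at $\lambda=n\epsilon/(2V)$ produces exponent $-n^2\epsilon^2/(4V)$, which is exactly the asserted bound.

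As for the main obstacle: for this right-tail direction there is essentially none, and that is precisely the point flagged in the text about asymmetry. The argument works cleanly only because $\lambda>0$ forces $\lambda\log p_{ik}\le 0$, the exact regime in which the quadratic upper bound $e^x\le 1+x+x^2/2$ holds. A mirror argument for the left tail would require $\lambda<0$, where that inequality reverses and $\sum_k p_{ik}(\log p_{ik})^2$ no longer controls the relevant quantity uniformly as some $p_{ik}\to 0$; that genuine difficulty is what the later sections must address, and is why the fixed-parameter right-tail estimate here is elementary.
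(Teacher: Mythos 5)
Your proof is correct and follows essentially the same route as the paper: compute $M_Y(\lambda,\V{p}_i)$ in closed form, bound $p_{ik}^{1+\lambda}=p_{ik}e^{\lambda\log p_{ik}}$ by a quadratic upper estimate on the exponential, cancel the linear term using $\sum_k p_{ik}=1$ and $\log(1+t)\le t$, and finish with the standard Chernoff optimization at $\lambda=n\epsilon/(2V)$. The only cosmetic difference is that you invoke $e^x\le 1+x+x^2/2$ for $x\le 0$ (giving a factor-of-two-better MGF bound that implies \eqref{right_mgf}), whereas the paper uses the looser $e^x\le 1+x+x^2$ for $x\le 1$, presumably so the identical inequality can be reused verbatim in the left-tail Theorem \ref{thm:negative} where $\lambda\log p_{ik}$ can be positive.
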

\begin{proof}
	First note two elementary inequalities: $\log(1+x)\leq x$ for $x>-1$, and $e^x\leq 1+x+x^2$ for $x\leq 1$.  For $\lambda>0$,
	\begin{align}
	\log M_Y(\lambda,\V{p}_i)  = & \log \left (  \sum_{k=1}^K p_{ik}^{\lambda+1} \right )-\lambda \sum_{k=1}^K p_{ik} \log p_{ik} \nonumber \\
	\leq  & \sum_{k=1}^K p_{ik}^{\lambda+1}-1-\lambda \sum_{k=1}^K p_{ik} \log p_{ik} \nonumber \\
	= & \sum_{k=1}^K p_{ik} \exp(\lambda \log p_{ik})-1-\lambda \sum_{k=1}^K p_{ik} \log p_{ik} \nonumber \\
	\leq & \sum_{k=1}^K p_{ik} \left (1+\lambda \log p_{ik} +\lambda^2 (\log p_{ik})^2 \right )-1-\lambda \sum_{k=1}^K p_{ik} \log p_{ik} = \lambda^2 \sum_{k=1}^K p_{ik} (\log p_{ik})^2. \label{positive}
	\end{align}
	Therefore, $
	M_Y(\lambda,\V{p}_i)  \leq \exp \left (  \lambda^2 \sum_{k=1}^K p_{ik} (\log p_{ik})^2 \right ).
	$
	
	The rest of the proof follows from a standard Chernoff bound argument on \textit{sub-Gaussian} variables (for example, see Chapter 2 of \cite{wainwright_2019}). For $\lambda>0$, by Markov's inequality,
	\begin{align}
	& \mathbb{P} \left (     \sum_{i=1}^n Y_i  \geq n\epsilon  \right ) = \mathbb{P} \left ( e^{\lambda \sum_{i=1}^n Y_i}  \geq e^{\lambda n \epsilon }  \right ) \leq \frac{\prod_{i=1}^n \mathbb{E} [e^{\lambda Y_i}]}{ e^{\lambda n \epsilon } } \leq \exp \left \{\lambda^2 \sum_{i=1}^n \sum_{k=1}^K p_{ik} (\log p_{ik})^2 -\lambda n \epsilon \right  \}. \label{chernoff} 
	\end{align}
	We obtain the result by letting $\lambda=\frac{n \epsilon}{2\sum_{i=1}^n \sum_{k=1}^K p_{ik} (\log p_{ik})^2} $.
\end{proof}

\begin{theorem}[Left-tail bound for fixed parameters]\label{thm:negative}
	Let $b=\max_{i=1,...,n,k=1,...,K} |\log p_{ik}|$. 		For $K\geq 2$ and $-1/b\leq \lambda<0$,
	\begin{align}
	M_Y(\lambda,\V{p}_i)  \leq \exp \left (  \lambda^2 \sum_{k=1}^K p_{ik} (\log p_{ik})^2 \right ), \,\, i=1,...,n. \label{left_mgf}
	\end{align}
	For $K\geq 2$ and $\epsilon>0$,
	\begin{align}	
	& \mathbb{P}\left ( \frac{1}{n} \sum_{i=1}^n \left ( \sum_{k=1}^K   z_{ik} \log p_{ik} - \sum_{k=1}^K   p_{ik} \log p_{ik} \right ) \leq -\epsilon \right ) \nonumber \\
	\leq &  \left \{ \begin{array}{ll}
	\exp \left ( -\frac{n^2\epsilon^2}{4 \sum_{i=1}^n \sum_{k=1}^K p_{ik} (\log p_{ik})^2 } \right ) & \text{for } 0<\epsilon \leq \frac{2\sum_{i=1}^n \sum_{k=1}^K p_{ik} (\log p_{ik})^2}{n b} \\
	\exp \left ( -\frac{n\epsilon}{2 b} \right ) & \text{for } \epsilon > \frac{2\sum_{i=1}^n \sum_{k=1}^K p_{ik} (\log p_{ik})^2}{n b}.
	\end{array}
	\right . 
	\end{align}
\end{theorem}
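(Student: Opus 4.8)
The plan is to mirror the proof of Theorem~\ref{thm:positive}, isolating the one place where the sign $\lambda<0$ changes the argument. For the MGF bound \eqref{left_mgf} I would start from the identity $\log M_Y(\lambda,\V{p}_i)=\log\bigl(\sum_{k=1}^K p_{ik}^{\lambda+1}\bigr)-\lambda\sum_{k=1}^K p_{ik}\log p_{ik}$, which is valid for every real $\lambda$ because each $\V{p}_i$ is an interior point, so $\sum_k p_{ik}^{\lambda+1}$ is finite and strictly positive. Applying $\log(1+x)\le x$ and then $e^{x}\le 1+x+x^2$ with $x=\lambda\log p_{ik}$ collapses the right-hand side to $\lambda^2\sum_k p_{ik}(\log p_{ik})^2$ exactly as in the earlier proof (using $\sum_k p_{ik}=1$). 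The only new point is to verify that $e^{x}\le 1+x+x^2$ is being used inside its range of validity: here $\lambda<0$ and $\log p_{ik}\le 0$, so $x=|\lambda|\,|\log p_{ik}|\ge 0$, and we need $x\le 1$; since $|\log p_{ik}|\le b$ this holds as soon as $|\lambda|\le 1/b$, which is precisely the hypothesis $-1/b\le\lambda<0$.

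For the tail bound, Markov's inequality applied to $e^{\lambda\sum_i Y_i}$ with $\lambda<0$ gives, using \eqref{left_mgf} whenever $\lambda\in[-1/b,0)$,
\[
\mathbb{P}\Bigl(\tfrac1n\textstyle\sum_{i=1}^n Y_i\le-\epsilon\Bigr)\le\exp\bigl(\lambda^2 V+\lambda n\epsilon\bigr),\qquad V:=\textstyle\sum_{i=1}^n\sum_{k=1}^K p_{ik}(\log p_{ik})^2 .
\]
It then remains to minimize the upward parabola $\lambda\mapsto\lambda^2 V+\lambda n\epsilon$ over the feasible interval $[-1/b,0)$. Its unconstrained vertex is $\lambda^\star=-n\epsilon/(2V)$, which lies in $[-1/b,0)$ exactly when $\epsilon\le 2V/(nb)$; substituting $\lambda^\star$ yields the exponent $-n^2\epsilon^2/(4V)$, the first branch. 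When $\epsilon>2V/(nb)$ the vertex falls to the left of $-1/b$, so the parabola is increasing on $[-1/b,0)$ and the best feasible choice is the endpoint $\lambda=-1/b$; this gives the exponent $V/b^2-n\epsilon/b$, and the inequality $V\le n\epsilon b/2$ (which is just the case hypothesis rearranged) upgrades it to $-n\epsilon/(2b)$, the second branch.

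I do not anticipate a real obstacle: every ingredient is either lifted verbatim from Theorem~\ref{thm:positive} or is the elementary minimization of a quadratic on an interval. The only care needed is the bookkeeping at the boundary $\lambda=-1/b$, together with the (convenient) observation that the crossover value $\epsilon=2V/(nb)$ separating the two branches is exactly the $\epsilon$ at which $\lambda^\star$ leaves the feasible region, so the two branches agree there. It is worth emphasizing what this proof exposes: unlike the right tail, the left-tail exponent genuinely involves $b=\max_{i,k}|\log p_{ik}|$ and hence degrades as some $p_{ik}\to 0$. This asymmetry is precisely why a bound uniform over all $\V{p}_i\in\mathcal{C}$ cannot be read off by this elementary route and requires the finer MGF analysis of Section~\ref{sec:main}.
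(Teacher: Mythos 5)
Your proposal is correct and follows essentially the same route as the paper: derive the MGF bound by composing $\log(1+x)\le x$ with $e^{x}\le 1+x+x^2$ (valid here because $-1/b\le\lambda<0$ keeps $\lambda\log p_{ik}\in[0,1]$), then apply the Chernoff/Markov argument and minimize the quadratic $\lambda^2 V+\lambda n\epsilon$ over $[-1/b,0)$, splitting into the interior-vertex case and the endpoint case. Your explicit verification that the endpoint exponent $V/b^2-n\epsilon/b$ upgrades to $-n\epsilon/(2b)$ via the case hypothesis $V\le n\epsilon b/2$ supplies a small step the paper leaves implicit, but the argument is the same.
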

\begin{proof}
	For $- |\frac{1}{\log p_{ik}}|\leq \lambda<0,i=1,...,n,k=1,...,K$,
	\begin{align}
	\log M_Y(\lambda,\V{p}_i)  
	\leq & \sum_{k=1}^K p_{ik} \exp(\lambda \log p_{ik})-1-\lambda \sum_{k=1}^K p_{ik} \log p_{ik} \nonumber \\
	\leq & \sum_{k=1}^K p_{ik} \left (1+\lambda \log p_{ik} +\lambda^2 (\log p_{ik})^2 \right )-1-\lambda \sum_{k=1}^K p_{ik} \log p_{ik} =\lambda^2 \sum_{k=1}^K p_{ik} (\log p_{ik})^2, \label{negative}
	\end{align}
	the second inequality follows from $\lambda \log p_{ik}\leq 1$ and $e^x\leq 1+x+x^2$ for $x\leq 1$. 
	Therefore, $
	M_Y(\lambda,\V{p}_i)  \leq \exp \left (  \lambda^2 \sum_{k=1}^K p_{ik} (\log p_{ik})^2 \right )
	$, $i=1,...,n$, for  $-1/b\leq \lambda<0$.
	
	The rest of the proof follows from a standard Chernoff bound argument on \textit{sub-exponential} variables. For $-1/b \leq \lambda<0$, by a similar argument in \eqref{chernoff},
	\begin{align*}
	& \mathbb{P} \left (     \sum_{i=1}^n Y_i  \leq -n\epsilon  \right ) = \mathbb{P} \left ( e^{\lambda \sum_{i=1}^n Y_i}  \geq e^{-\lambda n \epsilon }  \right ) \leq \frac{\prod_{i=1}^n \mathbb{E} [e^{\lambda Y_i}]}{ e^{-\lambda n \epsilon } } \leq \exp \left \{\lambda^2 \sum_{i=1}^n \sum_{k=1}^K p_{ik} (\log p_{ik})^2 +\lambda n \epsilon \right  \}.  \\
	\end{align*}
	We obtain the result by letting $\lambda=-\frac{n \epsilon}{2\sum_{i=1}^n \sum_{k=1}^K p_{ik} (\log p_{ik})^2}$ for $0< \epsilon \leq \frac{2\sum_{i=1}^n \sum_{k=1}^K p_{ik} (\log p_{ik})^2}{n b}$ and $\lambda=-1/b$ for $\epsilon > \frac{2\sum_{i=1}^n \sum_{k=1}^K p_{ik} (\log p_{ik})^2}{n b}$.
\end{proof}

A key difference between the two results is in the second inequality of \eqref{positive} and \eqref{negative}, where  $\lambda$ in the two theorems have opposite signs. The inequality in \eqref{positive} always holds  since $\lambda \log p_{ik}\leq 0$, 
but the inequality in \eqref{negative} cannot be true for a large $\lambda \log p_{ik}$ when $\lambda<0$ because of the exponential growth. Therefore, it is not difficult to find a quantity that does not depend on $\{\V{p}_i\}$ to bound the right-tail probability according to the discussion in Section \ref{sec:motivation} (obtaining the optimal order $(\log K)^2$ is however nontrivial and will be shown in Section \ref{sec:main}). To find a uniform bound for the left-tail probability is, however, more challenging since a positive $b$ does not exist in Theorem \ref{thm:negative} if $p_{ik} \rightarrow 0$ for some $i,k$. We develop a new technique to uniformly control $\lambda \log p_{ik}$ when $\lambda<0$ in Section \ref{sec:main}.

The asymmetry of the left and right tails can be understood by the following heuristic argument. Note that $p_{ik}$ can only contribute to the positive part of $\sum_{k=1}^K   z_{ik} \log p_{ik} - \sum_{k=1}^K   p_{ik} \log p_{ik}$ when $z_{ik}=0$. The contribution  $-p_{ik}\log p_{ik}$ is however negligible when $p_{ik}$ is close to 0. On the other hand,  $p_{ik}$ contributes to the negative part of $\sum_{k=1}^K   z_{ik} \log p_{ik} - \sum_{k=1}^K   p_{ik} \log p_{ik}$ when $z_{ik}=1$, and the contribution $(1-p_{ik})\log p_{ik}$ blows up when $p_{ik}$ is close to 0.

\section{Proof of the main result} \label{sec:main}
We break up Theorem \ref{thm:main} into a number of intermediate  results. Firstly, we prove the uniform convergence of \eqref{main_interest} under the condition  $ (\log K)^2 /n=o(1)$ by establishing a polynomial decay bound for \eqref{main_interest}. Secondly, we prove that \eqref{main_interest} does not converge to 0 if $(\log K)^2 /n=\Omega(1)$, which implies   $ (\log K)^2 /n=o(1)$ is the optimal rate. Finally, we prove the most difficult part, i.e., the exponential decay bound for \eqref{main_interest}.  

Recall that $\mathcal{C}=\{\V{q}=(q_1,...,q_K): 0\leq q_k \leq 1, k=1,...,K, \sum_{k=1}^K q_k =1  \}$, which gives the constraints each $\V{p}_i$ must satisfy. Let $\mathcal{D}=\{\V{q}=(q_1,...,q_K): 0\leq q_k \leq 1, k=1,...,K \}$ be another domain which excludes the constraint $\sum_{k=1}^K q_k =1$.

We begin by a lemma on the upper bound for $\textnormal{Var}( L(\V{z}_i))$, which may be of independent interest. Below we omit the index $i$ since the result is independent of $i$.   
\begin{lemma}\label{control_var}
	For $K\geq 5$, 
	\begin{align*}
	\max_{\V{p} \in \mathcal{C}} \sum_{k=1}^K p_{k} (\log p_{k})^2 =(\log K)^2.
	\end{align*}
\end{lemma}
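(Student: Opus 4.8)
The plan is to reduce the $K$-variable constrained optimization to a single scalar inequality via a supporting-line argument. Write $\phi(p)=p(\log p)^2$ for $p\in(0,1]$ and $\phi(0)=0$, and set $t=\log K$. The heart of the proof is the claim that, for every $K\ge 5$,
\begin{equation}
\phi(p)\;\le\;\bigl(t^2-2t\bigr)\,p\;+\;\frac{2t}{K},\qquad\forall\,p\in[0,1],
\label{eq:plan-tangent}
\end{equation}
with equality at $p=1/K$. The right-hand side is exactly the tangent line to $\phi$ at $p=1/K$, since $\phi'(p)=(\log p)^2+2\log p$ gives $\phi'(1/K)=t^2-2t$ and $\phi(1/K)=t^2/K$. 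Granting \eqref{eq:plan-tangent}, I would sum over $k$ and use $\sum_{k=1}^K p_k=1$ to get
$$
\sum_{k=1}^K p_k(\log p_k)^2\;\le\;(t^2-2t)\sum_{k=1}^K p_k+K\cdot\frac{2t}{K}\;=\;t^2-2t+2t\;=\;(\log K)^2,
$$
uniformly over $\V{p}\in\mathcal{C}$; since the uniform distribution $p_k=1/K$ attains the value $(\log K)^2$, this proves the lemma.

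It then remains to establish \eqref{eq:plan-tangent}. I would substitute $p=e^{-s}$ with $s\ge 0$ and multiply through by $e^{s}$, using $2t/K=2te^{-t}$, which reduces \eqref{eq:plan-tangent} to
$$
h(s)\;:=\;e^{-s}\bigl(s^2-t^2+2t\bigr)\;\le\;2t\,e^{-t},\qquad\forall\,s\ge 0.
$$
A direct computation gives $h'(s)=-e^{-s}(s-t)(s+t-2)$, so the only critical points are $s=t$ and $s=2-t$, with $h(t)=2te^{-t}$ and $h(0)=t(2-t)$. For $t\ge 2$ (i.e.\ $K\ge 8$), $h$ increases on $[0,t]$ and decreases on $[t,\infty)$, hence $\max_{s\ge0}h=h(t)=2te^{-t}$ and \eqref{eq:plan-tangent} follows. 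For $1<t<2$ (i.e.\ $K\in\{5,6,7\}$), the sign analysis of $h'$ shows $h$ is decreasing on $[0,2-t]$, increasing on $[2-t,t]$, decreasing on $[t,\infty)$, so its only local maxima are at $s=0$ and $s=t$; thus it suffices to check $h(0)\le h(t)$, i.e.\ $t(2-t)\le 2te^{-t}$, i.e.\ $g(t):=t-2+2e^{-t}\ge 0$. Since $g'(t)=1-2e^{-t}>0$ for $t>\log 2$, it is enough to verify $g(\log 5)\ge 0$; and since $e^{1.6}<5$ gives $\log 5>1.6$, we get $g(\log 5)>1.6-2+2/5=0$. This completes the argument.

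The main obstacle is precisely inequality \eqref{eq:plan-tangent}. Because $\phi''(p)=\tfrac{2(\log p+1)}{p}$, the function $\phi$ is concave on $(0,e^{-1})$ but convex on $(e^{-1},1)$, so the tangent line at $1/K$ is not automatically a global upper bound — indeed \eqref{eq:plan-tangent} fails for $K\le 4$, and it is tight both at $p=1/K$ and, in the limit, near $p=1$. Consequently the whole statement hinges on the knife-edge inequality $g(\log 5)\ge 0$, which carries essentially no slack, so some care is needed to make the estimate $\log 5>1.6$ (equivalently $e^{1.6}<5$) genuinely rigorous rather than merely numerical.
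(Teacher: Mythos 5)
Your argument is correct and, in substance, the same as the paper's: the tangent-line bound $\phi(p)\le(t^2-2t)p+2t/K$ is precisely the paper's Lagrangian inequality $p(\log p)^2+\nu p\le h(1/K)$ with $\nu=-(\log K)^2+2\log K$, rearranged, and your critical points $s=t$, $s=2-t$ are the paper's $p=1/K$, $p=e^{\log K-2}$ in logarithmic coordinates. The substitution $s=-\log p$ is a slightly cleaner way to run the verification since the derivative factors as $-e^{-s}(s-t)(s+t-2)$, and you also make explicit the boundary comparison $h(0)\le h(t)$ (equivalently $h(1)\le h(1/K)$, pinned down by $e^{1.6}<5$) which the paper asserts for this lemma without spelling out the numerics.
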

\begin{proof}
	The statement in the lemma is equivalent to the following optimization problem: 
	\begin{align*}
	& \max_{\V{p} \in\mathcal{D}} \sum_{k=1}^K p_{k} (\log p_{k})^2,  \\
	& \textnormal{subject to }  \sum_{k=1}^K p_{k} =1. \nonumber
	\end{align*}
	Consider the Lagrangian function \cite{boyd2004convex}:
	\begin{align*}
	\mathscr{L}(\V{p}, \nu)= \sum_{k=1}^K p_{k} (\log p_{k})^2+\nu \left (\sum_{k=1}^K p_{k} -1 \right ).
	\end{align*}
	Define $
	g(\nu)=\max_{\V{p} \in \mathcal{D}} \mathscr{L}(\V{p}, \nu) 
	$.	Since $\mathcal{C}\subset \mathcal{D}$, $g(\nu)\geq \mathscr{L}(\tilde{\V{p}}, \nu)$ for all $\nu \in \mathbb{R}$ and $\tilde{\V{p}} \in \mathcal{C}$.
	Furthermore, by noting that for all  $\nu \in \mathbb{R}$ and $\tilde{\V{p}} \in \mathcal{C}$,
	\begin{align*}
	\mathscr{L}(\tilde{\V{p}}, \nu)=  \sum_{k=1}^K \tilde{p}_{k} (\log \tilde{p}_{k})^2+\nu \left (\sum_{k=1}^K \tilde{p}_{k} -1 \right )=\sum_{k=1}^K \tilde{p}_{k} (\log \tilde{p}_{k})^2,
	\end{align*}
	we have $g(\nu)\geq \sum_{k=1}^K \tilde{p}_{k} (\log \tilde{p}_{k})^2$ for $\tilde{\V{p}} \in \mathcal{C}$, which further implies $g(\nu)\geq \max_{\tilde{\V{p}} \in \mathcal{C}} \sum_{k=1}^K \tilde{p}_{k} (\log \tilde{p}_{k})^2 $. 
	
	The argument above shows that for all $\nu \in \mathbb{R}$, $g(\nu)$ is an upper bound for the original problem $\max_{\V{p} \in \mathcal{C}} \sum_{k=1}^K p_{k} (\log p_{k})^2$. Below we pick  $\nu=-(\log K)^2+2\log K$.
	Note that the optimization problem 
	\begin{align*}
	\max_{\V{p} \in \mathcal{D}} \sum_{k=1}^K p_{k} (\log p_{k})^2+\nu \left (\sum_{k=1}^K p_{k} -1 \right )
	\end{align*}
	is equivalent to $K$ separate problems: for $k=1,...,K$,
	\begin{align*}
	\max_{0\leq p_{k} \leq 1}  h(p_{k}):=p_{k} (\log p_{k})^2+\nu  p_{k}.
	\end{align*}	
	By taking the derivative with respect $p_{k}$, the local maximizer satisfies
	\begin{align*}
	(\log p_{k})^2+2 \log p_{k}+\nu=0.
	\end{align*}
	There are two candidate solutions of the quadratic equation $y^2+2y+\nu=0$:
	\begin{align*}
	y=-1-\sqrt{1-\nu}, y=-1+\sqrt{1-\nu},
	\end{align*}
	which are
	\begin{align*}
	y=-1-(\log K-1), y=-1+(\log K -1).
	\end{align*}	
	The corresponding solutions of $p_{k}$ are 
	\begin{align*}
	p_{k}=1/K, p_{k}=\exp(\log K-2).
	\end{align*}	
	Because $h''(1/K)<0$ and $h''(\exp(\log K-2))>0$ for $K\geq 3$, $1/K$ is the only local maximizer in $(0,1)$. Furthermore, because $h(1/K)\geq h(0)$ and $h(1/K) \geq h(1)$ for $K\geq 5$, $1/K$ is the global maximizer.
	
	Therefore,
	$
	g(\nu)=(\log K)^2  \geq \max_{\V{p} \in \mathcal{C}} 	 \sum_{k=1}^K p_{k} (\log p_{k})^2
	$. 
	The equality holds  because $(1/K,...,1/K)  \in \mathcal{C}$.
\end{proof}

Lemma \ref{control_var} can be viewed as a second-order version of a well-known inequality for entropies: $-\sum_{k=1}^K p_k (\log p_{k}) \leq \log K$, which can be proved by Jensen's inequality (see Theorem 2.6.4 in \cite{cover2006elements}).  But Lemma 1 is more difficult to prove because the function involved is neither convex nor concave. 

The next theorem immediately follows from Lemma \ref{control_var} and \eqref{cheby}.
\begin{theorem}[Uniform convergence]
	If $(\log K)^2 /n =o(1)$, for all $\epsilon>0$,
	\begin{align*}
	\sup_{\V{p}_1\in \mathcal{C},...,\V{p}_n\in \mathcal{C}} \mathbb{P}\left ( \left | \frac{1}{n} \sum_{i=1}^n \left ( \sum_{k=1}^K   z_{ik} \log p_{ik} - \sum_{k=1}^K   p_{ik} \log p_{ik} \right ) \right | \geq  \epsilon \right )
	\rightarrow 0.
	\end{align*} 
\end{theorem}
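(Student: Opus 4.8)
The plan is to obtain the statement as an immediate corollary of Chebyshev's inequality \eqref{cheby} together with the uniform variance bound of Lemma \ref{control_var}. First I would recall from \eqref{cheby} that, for every fixed choice of $\V{p}_1,\dots,\V{p}_n \in \mathcal{C}$,
\begin{align*}
\mathbb{P}\left( \left| \frac{1}{n}\sum_{i=1}^n \left( \sum_{k=1}^K z_{ik}\log p_{ik} - \sum_{k=1}^K p_{ik}\log p_{ik} \right) \right| \geq \epsilon \right) \leq \frac{\sum_{i=1}^n \textnormal{Var}(L(\V{z}_i))}{n^2\epsilon^2},
\end{align*}
and that $\textnormal{Var}(L(\V{z}_i)) \leq \sum_{k=1}^K p_{ik}(\log p_{ik})^2$ for each $i$.

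The next step is to bound $\sum_{k=1}^K p_{ik}(\log p_{ik})^2$ uniformly in $\V{p}_i \in \mathcal{C}$ and in $i$. For $K \geq 5$ this is precisely the content of Lemma \ref{control_var}, which gives $\sum_{k=1}^K p_{ik}(\log p_{ik})^2 \leq (\log K)^2$. For the finitely many remaining cases $K \in \{2,3,4\}$, where Lemma \ref{control_var} does not apply, I would instead invoke the elementary estimate from Section \ref{sec:motivation}, namely $\sum_{k=1}^K p_{ik}(\log p_{ik})^2 \leq K \max_{p\in[0,1]} p(\log p)^2 = 4Ke^{-2} \leq 16e^{-2} < (\log 5)^2$. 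Hence in every case $\textnormal{Var}(L(\V{z}_i)) \leq (\max\{\log K,\log 5\})^2$, a quantity that does not depend on $i$ or on $\V{p}_i$.

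Plugging this uniform bound into the Chebyshev estimate and then taking the supremum over $\V{p}_1,\dots,\V{p}_n \in \mathcal{C}$ on the left-hand side --- which is legitimate since the right-hand side is free of the parameters --- gives
\begin{align*}
\sup_{\V{p}_1\in\mathcal{C},\dots,\V{p}_n\in\mathcal{C}} \mathbb{P}\left( \left| \frac{1}{n}\sum_{i=1}^n \left( \sum_{k=1}^K z_{ik}\log p_{ik} - \sum_{k=1}^K p_{ik}\log p_{ik} \right) \right| \geq \epsilon \right) \leq \frac{(\max\{\log K,\log 5\})^2}{n\epsilon^2}.
\end{align*}
Under the hypothesis $(\log K)^2/n = o(1)$, and since trivially $(\log 5)^2/n \to 0$, the right-hand side tends to $0$ for each fixed $\epsilon > 0$, which is the desired conclusion.

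There is essentially no obstacle in this argument: all the difficulty has been front-loaded into Lemma \ref{control_var}, which pins down the order $(\log K)^2$ of the variance even though the objective is neither convex nor concave. The only point requiring a moment's care is that Lemma \ref{control_var} is stated for $K \geq 5$, so the small alphabet sizes $K \in \{2,3,4\}$ must be handled separately by the crude bound above; this is harmless because for bounded $K$ the variance is obviously $O(1)$.
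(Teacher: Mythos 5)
Your proof is correct and is precisely the argument the paper intends: the theorem is stated in the paper as following immediately from Chebyshev's inequality \eqref{cheby} and the variance bound of Lemma \ref{control_var}. Your explicit treatment of the cases $K\in\{2,3,4\}$ is a harmless extra bit of care; the paper's convention (adding empty categories to reach $K=5$, as noted after Theorem \ref{thm:unif_left}) handles the same point.
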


We now prove that $(\log K)^2 /n=o(1)$ is the optimal rate.

\begin{theorem}[Rate optimality]\label{thm:counter_eg}
	If $(\log K)^2 /n= \Omega(1)$,
	\begin{align*}
	\sup_{\V{p}_1 \in \mathcal{C},...,\V{p}_n\in \mathcal{C}} \textnormal{Var} \left(  \frac{1}{n} \sum_{i=1}^n  \sum_{k=1}^K   z_{ik} \log p_{ik} \right ) {\not \rightarrow} 0, 
	\end{align*}
	and for all $\epsilon>0$,
	\begin{align}
	\sup_{\V{p}_1 \in \mathcal{C},...,\V{p}_n\in \mathcal{C}} \mathbb{P}\left ( \left | \frac{1}{n} \sum_{i=1}^n \left ( \sum_{k=1}^K   z_{ik} \log p_{ik} - \sum_{k=1}^K   p_{ik} \log p_{ik} \right ) \right | \geq  \epsilon \right ) {\not \rightarrow} 0. \label{non_converge}
	\end{align}
\end{theorem}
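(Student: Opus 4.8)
The plan is to exhibit one explicit bad choice of parameters, the same $\V{p}$ for every $i$, from which both conclusions drop out. Since adding the deterministic number $\sum_k p_{ik}\log p_{ik}$ affects neither the variance nor the centered tail event, it suffices to work with $Y_i = \sum_k z_{ik}\log p_{ik} - \sum_k p_{ik}\log p_{ik}$. For $K$ large enough that $K\geq 3$ (the hypothesis $(\log K)^2/n=\Omega(1)$ forces $K\to\infty$ as $n\to\infty$), I would take $\V{p}=\V{p}(K)\in\mathcal{C}$ with $p_1=1/2$ and $p_k=\frac{1}{2(K-1)}$ for $2\leq k\leq K$, and set $\V{p}_1=\cdots=\V{p}_n=\V{p}$. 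The point of this choice is that $\sum_k z_{ik}\log p_{ik}$ is then a genuine two-point random variable --- it equals $\log\tfrac12$ on the event $z_{i1}=1$, of probability $\tfrac12$, and the constant $\log\frac{1}{2(K-1)}$ otherwise --- so a one-line computation gives that $Y_i=\pm\tfrac12\log(K-1)$, each with probability $\tfrac12$; equivalently, $Y_i=\tfrac12\log(K-1)\,\xi_i$ with $\xi_1,\dots,\xi_n$ i.i.d. Rademacher variables.

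From this, $\textnormal{Var}(Y_i)=\tfrac14(\log(K-1))^2$ and hence $\textnormal{Var}\bigl(\frac1n\sum_{i=1}^n\sum_{k=1}^K z_{ik}\log p_{ik}\bigr)=\frac{(\log(K-1))^2}{4n}$. Under $(\log K)^2/n=\Omega(1)$ we have $(\log(K-1))^2\geq\tfrac14(\log K)^2$ for $n$ large, so this quantity is bounded away from $0$; since the supremum over $\mathcal{C}^n$ is at least the value at this particular $\V{p}$, the first assertion follows.

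For the second assertion, fix $\epsilon>0$. With the same construction, $\mathbb{P}\bigl(|\frac1n\sum_i Y_i|\geq\epsilon\bigr)=\mathbb{P}\bigl(|\frac1n\sum_i\xi_i|\geq\frac{2\epsilon}{\log(K-1)}\bigr)$. The hypothesis yields a constant $c>0$ with $(\log(K-1))^2\geq\tfrac{c}{4}n$ for $n$ large, so the threshold $\frac{2\epsilon}{\log(K-1)}$ is at most $t/\sqrt n$ with $t:=4\epsilon/\sqrt{c}$; hence the probability is at least $\mathbb{P}\bigl(|n^{-1/2}\sum_{i=1}^n\xi_i|\geq t\bigr)$, which converges to $\mathbb{P}(|Z|\geq t)>0$ for $Z\sim N(0,1)$ by the central limit theorem. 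Therefore the supremum over $\mathcal{C}^n$ does not converge to $0$, which is \eqref{non_converge}.

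The main obstacle is the construction itself rather than any estimate. The uniform distribution, which maximizes entropy-type functionals, makes $\sum_k z_{ik}\log p_{ik}$ deterministic and so has zero variance; one must instead keep a constant fraction of the mass on a single heavy atom while spreading the rest over $K-1$ equal light atoms, so that $\textnormal{Var}(Y_i)$ attains the full order $(\log K)^2$. Choosing the heavy/light split to be exactly $\tfrac12$--$\tfrac12$ is a convenience: it turns $Y_i$ into an exact scaled Rademacher variable, so the probability statement needs only the textbook CLT and not a quantitative anti-concentration bound. Everything after the construction is bookkeeping.
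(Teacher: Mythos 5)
Your proof is correct and follows essentially the same route as the paper: the same construction $p_1=1/2$, $p_k=\tfrac{1}{2(K-1)}$ for $k\geq 2$, the same reduction of $Y_i$ to a scaled Rademacher variable, and the same variance computation. The only difference is that you close the tail-probability argument by first bounding the (moving) threshold by a constant $t$ via the hypothesis and then invoking the plain CLT, whereas the paper plugs the $n$-dependent threshold directly into $\Phi$ and controls the error with Berry--Esseen; both are valid, and your version is marginally more elementary.
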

\begin{proof}
	We only need to find a parameter setting $\{\V{p}_i^*\}$ such that $\{\V{z}^*_{i} \}$ generated under $\{\V{p}_i^*\}$ satisfy
	\begin{align*}
	\textnormal{Var} \left(  \frac{1}{n} \sum_{i=1}^n  \sum_{k=1}^K   z_{ik}^* \log p_{ik}^* \right ) {\not \rightarrow} 0, 
	\end{align*}
	and
	\begin{align*}
	\mathbb{P}\left ( \left | \frac{1}{n} \sum_{i=1}^n \left ( \sum_{k=1}^K   z_{ik}^* \log p_{ik}^* - \sum_{k=1}^K   p_{ik}^* \log p_{ik}^* \right ) \right | \geq  \epsilon \right ) {\not \rightarrow} 0.
	\end{align*}
	Let $p^*_{i1}=1/2, p^*_{i2}=p^*_{i3}=\cdots= p^*_{iK}=\frac{1}{2(K-1)}$ for $i=1,...,n$ and let $\{z^*_{ik} \}$ be the corresponding random variables generated under $\{\V{p}_i^*\}$.
	
	Let $w_1,...,w_n$ be i.i.d.  variables where 
	\begin{align*}
	w_i=  \left \{ \begin{array}{ll}
	1 & \text{w.p. } 1/2 \\
	-1 & \text{w.p. } 1/2.
	\end{array} \right .
	\end{align*}
	Then it is easy to check that
	\begin{align*}
	\sum_{i=1}^n \left ( \sum_{k=1}^K   z^*_{ik} \log p^*_{ik} - \sum_{k=1}^K   p^*_{ik} \log p^*_{ik} \right ) \stackrel{d}{=} \frac{1}{2} \log (K-1) \sum_{i=1}^n w_i.
	\end{align*}
	Therefore, 
	$
	\textnormal{Var} \left(  \frac{1}{n} \sum_{i=1}^n  \sum_{k=1}^K   z_{ik}^* \log p_{ik}^* \right ) = \frac{1}{4n}(\log (K-1))^2 {\not \rightarrow} 0
	$ if $(\log K)^2 /n= \Omega(1)$.
	
	By the Berry-Esseen theorem (see Theorem 3.4.17 in \cite{durrett2019probability} for example), for any $x \in (-\infty,\infty)$,
	\begin{align*}
	\left | \mathbb{P} \left ( \frac{1}{\sqrt n}\sum_{i=1}^n w_i \leq x \right )-\Phi(x) \right | \leq \frac{C}{\sqrt n},
	\end{align*}
	where $\Phi(\cdot)$ is the 
	cumulative distribution function of the standard normal distribution and $C$ is an absolute constant. Therefore, 
	\begin{align*}
	\mathbb{P}\left (  \frac{1}{n} \sum_{i=1}^n \left ( \sum_{k=1}^K   z^*_{ik} \log p^*_{ik} - \sum_{k=1}^K   p^*_{ik} \log p^*_{ik} \right )  \leq  -\epsilon \right ) \geq \Phi \left ( \frac{-2 \sqrt{n} \epsilon }{\log(K-1)} \right )-\frac{C}{\sqrt n},
	\end{align*}
	where 
	$
	\Phi \left ( \frac{-2 \sqrt{n} \epsilon }{\log(K-1)} \right ) \not \rightarrow 0
	$
	if $(\log K)^2 /n= \Omega(1)$.
\end{proof}

It is a useful idea to get a sense of the strongest possible concentration inequality by checking the convergence rate of the variance (see the introduction of \cite{vershynin2019concentration} for example).  It is also worth mentioning that $\mbox{Var}{(X_n)}\not \rightarrow 0$ does not automatically imply $\mathbb{P}{(|X_n-\mathbb{E}[X_n]|>\epsilon)}\not \rightarrow 0$ for an arbitrary sequence $\{X_n\}$. The result in \eqref{non_converge}  relies on tail behavior of the sum of independent variables. 

We now prove the exponential decay bound \eqref{main1}. The right-tail bound is  relatively easy to prove as pointed out in Section \ref{sec:fixed}. 
\begin{theorem}[Uniform bound for the right tail]\label{thm:unif_right}
	For $K\geq 5$ and $\lambda>0$,\begin{align}
	M_Y(\lambda,\V{p}_i)  \leq \exp \left (  \lambda^2 (\log K)^2 \right ), \,\, i=1,...,n. \label{inequality_MGF}
	\end{align}
	For $K\geq 5$ and $\epsilon>0$,
	\begin{align*}	
	\sup_{\V{p}_1\in \mathcal{C},...,\V{p}_n\in \mathcal{C}}  \mathbb{P}\left ( \frac{1}{n} \sum_{i=1}^n \left ( \sum_{k=1}^K   z_{ik} \log p_{ik} - \sum_{k=1}^K   p_{ik} \log p_{ik} \right ) \geq  \epsilon \right ) \leq \exp \left ( -\frac{n \epsilon^2}{4 (\log K)^2} \right ).
	\end{align*}	
\end{theorem}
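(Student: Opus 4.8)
The plan is to obtain this theorem by simply marrying the right-tail MGF computation already carried out in Theorem \ref{thm:positive} with the sharp variance estimate of Lemma \ref{control_var}. Recall that the chain of inequalities culminating in \eqref{positive} established, for every $\lambda>0$ and every $\V{p}_i\in\mathcal{C}$,
\begin{align*}
\log M_Y(\lambda,\V{p}_i)\leq \lambda^2\sum_{k=1}^K p_{ik}(\log p_{ik})^2,
\end{align*}
and that this step relied only on $\lambda\log p_{ik}\leq 0$ together with $e^x\leq 1+x+x^2$ for $x\leq 1$, so it holds on all of $\mathcal{C}$ with no restriction keeping $\V{p}_i$ away from the boundary. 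Since $K\geq 5$, Lemma \ref{control_var} gives $\sum_{k=1}^K p_{ik}(\log p_{ik})^2\leq(\log K)^2$, and feeding this into the display yields \eqref{inequality_MGF}; crucially, the resulting bound no longer depends on $\V{p}_i$.

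For the tail probability I would reuse the Chernoff argument of \eqref{chernoff} verbatim, substituting \eqref{inequality_MGF} for \eqref{right_mgf}: for $\lambda>0$,
\begin{align*}
\mathbb{P}\left(\sum_{i=1}^n Y_i\geq n\epsilon\right)\leq \frac{\prod_{i=1}^n\mathbb{E}[e^{\lambda Y_i}]}{e^{\lambda n\epsilon}}\leq \exp\left(n\lambda^2(\log K)^2-\lambda n\epsilon\right).
\end{align*}
Optimizing the exponent in $\lambda$ — the minimizer being $\lambda=\epsilon/(2(\log K)^2)$ — produces the stated exponent $-n\epsilon^2/(4(\log K)^2)$. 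Because this upper bound is the same for every choice of $\V{p}_1,\dots,\V{p}_n$, taking the supremum over $\mathcal{C}^n$ leaves it unchanged, which is precisely the assertion of the theorem.

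I expect no genuine obstacle in this particular theorem: the asymmetry discussion closing Section \ref{sec:fixed} already explained that the right tail is the benign side, exactly because the sign of $\lambda\log p_{ik}$ cooperates with the quadratic Taylor bound, so no analogue of the blow-up parameter $b$ from Theorem \ref{thm:negative} intrudes. The only substantive input, the sharp constant $(\log K)^2$, has been secured in Lemma \ref{control_var}; the hard work — a uniform control of the left tail when some $p_{ik}\to 0$ — lies in the results that follow and is untouched here.
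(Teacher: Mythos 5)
Your argument is essentially identical to the paper's: it combines the right-tail MGF bound \eqref{right_mgf} from Theorem \ref{thm:positive} with the variance bound $(\log K)^2$ from Lemma \ref{control_var}, then runs the standard sub-Gaussian Chernoff optimization, exactly as the paper does. Your additional remark that the chain in \eqref{positive} uses only $\lambda\log p_{ik}\le 0$ and $e^x\le 1+x+x^2$, and hence holds on all of $\mathcal{C}$ including the boundary, is a correct and worthwhile clarification but does not change the route.
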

\begin{proof}
	The first conclusion immediately follows from  \eqref{right_mgf} and Lemma \ref{control_var}.  By the standard Chernoff bound argument on sub-Gaussian variables as in Theorem \ref{thm:positive}, for all $\V{p}_1\in \mathcal{C},...,\V{p}_n\in \mathcal{C}$,
	\begin{align*}	
	\mathbb{P}\left ( \frac{1}{n} \sum_{i=1}^n \left ( \sum_{k=1}^K   z_{ik} \log p_{ik} - \sum_{k=1}^K   p_{ik} \log p_{ik} \right ) \geq  \epsilon \right ) \leq \exp \left ( -\frac{n \epsilon^2}{4 (\log K)^2} \right ).
	\end{align*}	
	The second conclusion follows immediately. 
\end{proof}

We now prove the exponential decay bound for the left tail, which is the most difficult part in the main theorem. Note that we cannot directly apply \eqref{left_mgf} and Lemma \ref{control_var} in this case  because $b\rightarrow 0$ if $p_{ik} \rightarrow 0$ for some $i,k$. We therefore need to find a different approach for bounding the MGF. 

The next lemma is an optimization result, which gives an upper bound of the MGF. 
\begin{lemma}\label{thm:opt2}
	Let $F(\lambda,\V{p})=\sum_{k=1}^K p_k^{\lambda+1}-1-\lambda \sum_{k=1}^K p_k \log p_k$. For $K\geq 5$, $\lambda>-1$ and $\lambda \geq \frac{2-2/K-\log K}{(1-1/K) \log K }$,
	\begin{align*}
	\max_{\V{p}\in \mathcal{C}} F(\lambda,\V{p})=\exp(-\lambda \log K)-1 +\lambda \log K.
	\end{align*}
\end{lemma}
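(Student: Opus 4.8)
The plan is to run the Lagrangian-duality scheme that was used for Lemma~\ref{control_var}. Form the Lagrangian $\mathscr{L}(\V{p},\nu)=F(\lambda,\V{p})+\nu\bigl(\sum_{k=1}^K p_k-1\bigr)$ over the enlarged box $\mathcal{D}$; since $\mathcal{C}\subset\mathcal{D}$ and $\mathscr{L}(\tilde{\V{p}},\nu)=F(\lambda,\tilde{\V{p}})$ for every $\tilde{\V{p}}\in\mathcal{C}$, the quantity $g(\nu):=\max_{\V{p}\in\mathcal{D}}\mathscr{L}(\V{p},\nu)$ upper-bounds $\max_{\V{p}\in\mathcal{C}}F(\lambda,\V{p})$ for every $\nu\in\mathbb{R}$. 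The box-constrained maximization separates into $K$ copies of the one-dimensional problem $\max_{0\le p\le 1}\phi(p)$ with $\phi(p):=p^{\lambda+1}-\lambda p\log p+\nu p$, so $g(\nu)=K\max_{[0,1]}\phi-1-\nu$. I would then choose $\nu=\nu^*:=\lambda(1-\log K)-(\lambda+1)K^{-\lambda}$, which is exactly the value making $\phi'(1/K)=0$, and prove that with this choice $p=1/K$ is the global maximizer of $\phi$ on $[0,1]$. Granting this, a direct computation gives $g(\nu^*)=\exp(-\lambda\log K)-1+\lambda\log K$, and since $F(\lambda,\cdot)$ attains this same value at the uniform vector $(1/K,\ldots,1/K)\in\mathcal{C}$, the bound is tight and the lemma follows. (The case $\lambda=0$ is trivial, both sides being $0$.)

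The core is the one-dimensional claim. Differentiating twice, $\phi''(p)=\lambda p^{-1}\bigl((\lambda+1)p^{\lambda}-1\bigr)$, which for $\lambda\neq 0$, $\lambda>-1$ changes sign exactly once on $(0,1)$, at $p_0:=(\lambda+1)^{-1/\lambda}\in(0,1)$; thus $\phi$ is concave on $(0,p_0)$ and convex on $(p_0,1)$. Also $\phi(0)=0$ and $\phi'(0^+)=+\infty$ (as one verifies in the two sign cases $\lambda>0$ and $-1<\lambda<0$ separately), so $p=0$ is not the maximizer. A concave-then-convex $\phi$ with $\phi'(0^+)>0$ attains its maximum on $[0,1]$ at $p=1$ or at its unique stationary point inside the concave stretch $(0,p_0)$ (if such a point exists; otherwise at $p=1$). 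I would then check $1/K<p_0$, equivalently $(\lambda+1)K^{-\lambda}<1$, using $K\ge5$ together with the hypothesis $\lambda\ge\frac{2-2/K-\log K}{(1-1/K)\log K}$; this places the engineered stationary point $1/K$ in the concave part, so $\max_{[0,1]}\phi=\max\{\phi(1/K),\phi(1)\}$, and it remains to show $\phi(1/K)\ge\phi(1)$.

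For the last inequality I would regard $\psi(\lambda):=\phi(1/K)-\phi(1)$ (with $\nu^*$ substituted, so $\psi$ depends only on $\lambda$ and $K$) as a function of $\lambda$. A short calculation gives $\phi(1/K)=\frac{\lambda}{K}(1-K^{-\lambda})$ and $\phi(1)=1+\nu^*$, and one checks $\psi(0)=\psi'(0)=0$ while $\psi''(\lambda)$ has the sign of $\log K-2(1-1/K)+\lambda(1-1/K)\log K=(1-1/K)\log K\,(\lambda-\lambda_*)$, where $\lambda_*:=\frac{2-2/K-\log K}{(1-1/K)\log K}$ is precisely the threshold in the hypothesis. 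Since $K\ge5$ makes $\lambda_*<0$, the point $0$ lies in the range $(\lambda_*,\infty)$ on which $\psi$ is convex; a convex function with vanishing derivative at an interior point is minimized there, so $\psi(\lambda)\ge\psi(0)=0$ for all $\lambda\ge\lambda_*$. This is the claimed $\phi(1/K)\ge\phi(1)$, and collecting the pieces proves the lemma.

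The main obstacle is managing the double role played by the hypothesis on $\lambda$: it has to ensure both that the engineered stationary point $1/K$ falls in the \emph{concave} part of $\phi$ (so that it is the relevant interior competitor at all) and that $\phi(1/K)\ge\phi(1)$ (so that the interior competitor beats the right endpoint). Both reductions bottom out in the elementary inequality $2-2/K<\log K$, which holds for $K\ge5$ but fails at $K=4$ — this is what pins down the hypothesis $K\ge5$ — together with the second-derivative sign analysis of $\psi$ that reproduces the exact constant $\lambda_*$. Some care is also needed to carry the shape analysis of $\phi$ through the sub-case $-1<\lambda<0$ (where $p^{\lambda}$ is decreasing and several signs flip) and to dispatch $\lambda=0$ separately as noted.
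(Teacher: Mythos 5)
Your proposal follows the paper's strategy essentially step for step: the same Lagrangian relaxation over $\mathcal{D}$, the same multiplier $\nu^*=\lambda(1-\log K)-(\lambda+1)K^{-\lambda}$, the same separation into $K$ copies of a one-dimensional problem, and the same convexity argument (via $\psi(0)=\psi'(0)=0$ and a sign analysis of $\psi''$ that reproduces the threshold $\lambda_*$) to show $\phi(1/K)\ge\phi(1)$, which is exactly the paper's Lemma A.1. The endpoint comparison $\phi(1/K)\ge\phi(0)=0$ and the exhibition of the tight point $(1/K,\ldots,1/K)$ also match.

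The one place you diverge — and where there is a small gap plus a small error — is in certifying that $1/K$ is the \emph{relevant} interior competitor. You propose to verify directly that $1/K<p_0=(\lambda+1)^{-1/\lambda}$, claiming this is ``equivalently $(\lambda+1)K^{-\lambda}<1$.'' That equivalence only holds for $\lambda>0$; for $-1<\lambda<0$, dividing $\log(\lambda+1)<\lambda\log K$ by the negative $\lambda$ flips the inequality, so the correct restatement is $(\lambda+1)K^{-\lambda}>1$. More importantly, you leave the actual verification of $1/K<p_0$ as a sketch (``bottoms out in $2-2/K<\log K$''), but for negative $\lambda$ near $\lambda_*$ this is a genuine additional estimate that needs to be worked out — $p_0\to 0$ as $\lambda\to -1^+$, so it is not automatic. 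The paper avoids this entirely by running the argument in the other direction: it first proves $f(1/K)\ge f(1)$ (your $\psi\ge0$), and then derives $f''(1/K)<0$ — equivalently $1/K<p_0$ — by contradiction via the mean value theorem, using the same single-sign-change structure of $f''$ that you identified. That ordering makes the hypothesis on $\lambda$ do all its work in one place and removes the need for a separate lower bound on $p_0$. Reorganizing your proof to prove $\psi\ge0$ first and then deduce the local-max property by contradiction would close the gap and also dissolve the sign slip.
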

\begin{proof}
	We only need to prove the case $\lambda \neq 0$, otherwise the result is trivial. 		Consider the Lagrangian 
	\begin{align*}
	\mathscr{L}(\V{p}, \lambda, \nu)=  \sum_{k=1}^K p_k^{\lambda+1}-1-\lambda \sum_{k=1}^K p_k \log p_k+\nu \left (\sum_{k=1}^K p_k -1 \right ).
	\end{align*}
	By the same argument in Lemma \ref{control_var}, for all $\tilde{\V{p}}\in \mathcal{C}$ and $\nu \in \mathbb{R}$,
	\begin{align*}
	F(\lambda,\tilde{\V{p}}) \leq \max_{\V{p} \in \mathcal{D}} 	\mathscr{L}(\V{p}, \lambda, \nu).
	\end{align*}
	Fix $\nu=-(\lambda+1)(1/K)^\lambda+\lambda(-\log K +1)$. Note that the optimization problem $\max_{\V{p} \in \mathcal{D}} 	\mathscr{L}(\V{p}, \lambda, \nu)$ can be written as $K$ separate problems:
	for $k=1,...,K$,
	\begin{align*}
	\max_{0\leq p_k \leq 1} f(p_k),
	\end{align*}
	where $f(p_k)=p_k^{\lambda+1}-\lambda  p_k \log p_k+\nu p_k$.
	Since the $K$ optimization problems are identical, below we omit the index $k$. The first and second derivatives of $ f(p)$ are 
	\begin{align*}
	f'(p)= & (\lambda+1)p^\lambda-\lambda(\log p+1)+\nu, \\
	f''(p)=& (\lambda+1)\lambda p^{\lambda-1}-\lambda p^{-1}. 
	\end{align*}
	The choice of $\nu$ makes $1/K$ is a stationary point of $f(p)$ in $(0,1)$ since $f'(1/K)=0$. Below we prove that $1/K$ is the global maximizer in $[0,1]$. We need to prove $f(1/K)\geq f(1)$, $f(1/K) \geq f(0)$, and $1/K$ is the only local maximizer. The proof of $f(1/K)\geq f(1)$ and $f(1/K) \geq f(0)$ involves tedious calculation, so we leave it to Lemma \ref{thm:tedious} in the appendix.
	Next we prove that $1/K$ is the only local maximizer.
	
	Note that  the unique solution to $f''(p)=0$ in $(0,\infty)$ is $\exp \left (\frac{1}{\lambda}  \log \frac{1}{\lambda+1}  \right )$ when $\lambda \neq 0$ and $\lambda>-1$. Furthermore, $f''(p)< 0$ if $p< \exp \left (\frac{1}{\lambda}  \log \frac{1}{\lambda+1}  \right )$ and $f''(p)> 0$ if $p>\exp \left (\frac{1}{\lambda}  \log \frac{1}{\lambda+1}  \right )$. We only show the case $\lambda<0$: 
	\begin{align*}
	& (\lambda+1)\lambda p^{\lambda-1}-\lambda p^{-1} > (<) \, 0 \\
	\Leftrightarrow & (\lambda+1) p^\lambda< (>) \, 1 \\
	\Leftrightarrow & \lambda \log p < (>) \log \left ( \frac{1}{\lambda+1} \right ) \\
	\Leftrightarrow & p> (<)\exp \left ( \frac{1}{\lambda} \log \left ( \frac{1}{\lambda+1} \right ) \right ).
	\end{align*}
	The case $\lambda>0$ is similar. 
	
	Firstly, we prove that $1/K$ is a local maximizer, i.e., $f''(1/K)<0$. If $f''(1/K)\geq 0$, then for all $p>1/K$, by the mean value theorem there exists $\xi \in (1/K, p)$ such that $f'(p)-f'(1/K)=f''(\xi)(p-1/K)>0$ since $f''(\xi)>0$.  Therefore, $f'(p)>0$ for all $p>1/K$. Furthermore, there exists  $p \in (1/K,1)$ such that $f(1)-f(1/K)=f'(p)(1-1/K)>0$, which contradicts $f(1/K)\geq f(1)$.
	
	Secondly, we prove that $1/K$ is the only local maximizer. That is, there is no other $p$ such that $f'(p)=0$ and $f''(p)\leq 0$. If such a $p$ exists, without loss of generality, assume $1/K<p$. By the mean value theorem, there exists $\xi \in (1/K, p)$ such that $f'(p)-f'(1/K)=f''(\xi)(p-1/K)<0$ since $f''(\xi)<0$, which contradicts $f'(p)=f'(1/K)=0$. 
	
	Therefore, $1/K$ is the global maximizer of $f(p)$ in $[0,1]$. This implies 
	$\max_{\V{p} \in \mathcal{C}} 	F(\lambda,\V{p}) \leq \exp(-\lambda \log K)-1 +\lambda \log K$. The equality holds  because $(1/K,...,1/K)  \in \mathcal{C}$.
\end{proof}

We now give the uniform bound for $M_Y(\lambda,\V{p}_i)$ on both sides -- the key result of this paper, which covers  \eqref{inequality_MGF}. 
\begin{theorem}[Uniform bound for the MGF]\label{thm:bound_mgf}
	For $K\geq 5$ and $\lambda \geq -\min \left \{ \frac{1}{\log K},  \frac{\log K+2/K-2}{(1-1/K) \log K } \right \}$,
	\begin{align}\label{final_mgf}
	M_Y(\lambda,\V{p}_i)  \leq \exp \left (  \lambda^2 (\log K)^2 \right ), \,\, i=1,...,n. 
	\end{align}
\end{theorem}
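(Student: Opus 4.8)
\emph{Proof proposal.} The plan is to reduce the bound on the moment generating function to the optimization result of Lemma~\ref{thm:opt2}, and then to compare the resulting expression with $\lambda^2(\log K)^2$ using the elementary exponential inequalities already invoked in Section~\ref{sec:fixed}.

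First I would repeat the opening of the proof of Theorem~\ref{thm:positive}: since $\sum_{k=1}^K p_{ik}^{\lambda+1}-1>-1$, the inequality $\log(1+x)\le x$ gives $\log M_Y(\lambda,\V{p}_i)=\log\bigl(\sum_{k=1}^K p_{ik}^{\lambda+1}\bigr)-\lambda\sum_{k=1}^K p_{ik}\log p_{ik}\le\sum_{k=1}^K p_{ik}^{\lambda+1}-1-\lambda\sum_{k=1}^K p_{ik}\log p_{ik}=F(\lambda,\V{p}_i)$, valid whenever $\lambda>-1$. Next I would observe that the hypothesis $\lambda\ge-\min\{\tfrac{1}{\log K},\ \tfrac{\log K+2/K-2}{(1-1/K)\log K}\}$ is exactly the conjunction of the two constraints $\lambda\ge-\tfrac{1}{\log K}$ and $\lambda\ge\tfrac{2-2/K-\log K}{(1-1/K)\log K}$; in particular $\lambda\ge-1/\log K>-1$ since $\log K\ge\log 5>1$. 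The second of these constraints, together with $\lambda>-1$, is precisely the hypothesis of Lemma~\ref{thm:opt2}, so $\max_{\V{p}_i\in\mathcal{C}}F(\lambda,\V{p}_i)=\exp(-\lambda\log K)-1+\lambda\log K$. Combining, it remains only to verify the one-variable inequality $\exp(-\lambda\log K)-1+\lambda\log K\le\lambda^2(\log K)^2$.

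Writing $t=\lambda\log K$ (so $t\ge-1$ by the first constraint), the target becomes $e^{-t}-1+t\le t^2$ for $t\ge-1$. For $t\ge0$ I would use $e^{-t}\le1-t+t^2/2$ (convexity of $e^{-t}$, or the bound $e^x\le 1+x+x^2$), which gives $e^{-t}-1+t\le t^2/2\le t^2$. For $-1\le t<0$ I would set $s=-t\in(0,1]$ and apply $e^x\le 1+x+x^2$, valid for $x\le1$, with $x=s$, obtaining $e^{-t}-1+t=e^{s}-1-s\le s^2=t^2$. This establishes \eqref{final_mgf}.

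The substantive difficulty of this theorem has already been discharged by Lemma~\ref{thm:opt2} and the tedious estimates deferred to the appendix; what remains is bookkeeping plus a short scalar inequality, so I do not expect a real obstacle here. The only points requiring care are confirming that the two-sided constraint on $\lambda$ supplies exactly the hypotheses of Lemma~\ref{thm:opt2} together with $t=\lambda\log K\ge-1$, and the sign split in the last step, where the constraint $\lambda\ge-1/\log K$ is precisely what keeps $s=-\lambda\log K$ inside the range $x\le1$ on which $e^x\le 1+x+x^2$ holds.
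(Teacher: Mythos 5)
Your proposal is correct and follows essentially the same route as the paper: reduce to $F(\lambda,\V{p}_i)$ via $\log(1+x)\le x$, invoke Lemma~\ref{thm:opt2}, and finish with the elementary bound on $e^{-t}-1+t$. The only cosmetic difference is that you split by the sign of $t=\lambda\log K$, whereas the paper handles both signs at once by applying $e^x\le 1+x+x^2$ (valid for $x\le 1$) directly to $x=-\lambda\log K$.
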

\begin{proof}
	For $\lambda\geq -\frac{1}{\log K}$ and $\lambda \geq -\frac{\log K+2/K-2}{(1-1/K) \log K }$,
	\begin{align*}
	\log M_Y(\lambda,\V{p}_i) 
	& \leq   \sum_{k=1}^K p_{ik}^{\lambda+1}-1-\lambda \sum_{k=1}^K p_{ik} \log p_{ik} \\
	& \leq  \exp(-\lambda \log K)-1 +\lambda \log K \\
	& \leq 1-\lambda \log K +\lambda^2 (\log K)^2-1 +\lambda \log K=\lambda^2 (\log K)^2,
	\end{align*}
	where the second inequality follows from Lemma \ref{thm:opt2} and the third inequality follows from $e^x\leq 1+x+x^2$, $x\leq 1$.
\end{proof}
To prove the left-tail bound, we only need to use \eqref{final_mgf} for negative $\lambda$. By the standard Chernoff bound argument for sub-exponential variables as in Theorem \ref{thm:negative}, we obtain:
\begin{theorem}[Uniform bound for the left tail]\label{thm:unif_left}
	Let $b^*=1/\min \left \{ \frac{1}{\log K}, \frac{\log K+2/K-2}{(1-1/K) \log K } \right \}$. For $K\geq 5$ and $\epsilon>0$,
	\begin{align*}
	&\sup_{\V{p}_1\in \mathcal{C},...,\V{p}_n \in \mathcal{C}} \mathbb{P}\left (  \frac{1}{n} \sum_{i=1}^n \left ( \sum_{k=1}^K   z_{ik} \log p_{ik} - \sum_{k=1}^K   p_{ik} \log p_{ik} \right ) \leq  -\epsilon \right ) \\
	\leq &  \left \{ \begin{array}{ll}
	\exp \left ( -\frac{n\epsilon^2}{4 (\log K)^2} \right ) & \text{for } 0< \epsilon \leq \frac{2(\log K)^2}{b^*} \\
	\exp \left ( -\frac{n\epsilon}{2 b^*} \right ) & \text{for } \epsilon > \frac{2(\log K)^2}{b^*}.
	\end{array}
	\right . 
	\end{align*}
\end{theorem}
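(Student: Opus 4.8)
The plan is to rerun the Chernoff argument for sub-exponential variables from the proof of Theorem~\ref{thm:negative}, but to feed it the \emph{uniform} moment generating function bound of Theorem~\ref{thm:bound_mgf} in place of the instance-dependent bound \eqref{left_mgf}. Fix arbitrary $\V{p}_1,\dots,\V{p}_n\in\mathcal{C}$ and set $Y_i=\sum_{k=1}^K z_{ik}\log p_{ik}-\sum_{k=1}^K p_{ik}\log p_{ik}$ as in Section~\ref{sec:fixed}. For $\lambda<0$, Markov's inequality applied to $e^{\lambda\sum_i Y_i}$ together with independence gives
\begin{align*}
\mathbb{P}\left(\frac{1}{n}\sum_{i=1}^n Y_i\leq-\epsilon\right)=\mathbb{P}\left(e^{\lambda\sum_{i=1}^n Y_i}\geq e^{-\lambda n\epsilon}\right)\leq e^{\lambda n\epsilon}\prod_{i=1}^n M_Y(\lambda,\V{p}_i).
\end{align*}
Restricting to $-1/b^*\leq\lambda<0$, so that Theorem~\ref{thm:bound_mgf} applies, each factor is at most $\exp(\lambda^2(\log K)^2)$, whence the right-hand side is bounded by $\exp\big(n\lambda^2(\log K)^2+n\lambda\epsilon\big)$. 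Since this last quantity no longer involves $\{\V{p}_i\}$, it passes through the supremum in the statement unchanged, so it only remains to optimise over $\lambda$.

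Next I would minimise the exponent $q(\lambda):=n\lambda^2(\log K)^2+n\lambda\epsilon$ over $\lambda\in[-1/b^*,0)$, exactly as in the proof of Theorem~\ref{thm:negative}. The unconstrained minimiser of this upward parabola is $\lambda_0=-\epsilon/(2(\log K)^2)$, with $q(\lambda_0)=-n\epsilon^2/(4(\log K)^2)$. If $0<\epsilon\leq 2(\log K)^2/b^*$ then $\lambda_0\geq-1/b^*$ is admissible, and plugging in $\lambda_0$ yields the first branch. If $\epsilon>2(\log K)^2/b^*$ then $\lambda_0<-1/b^*$, so $q$ is increasing on all of $[-1/b^*,0)$ and the best admissible choice is the left endpoint $\lambda=-1/b^*$; since $\epsilon\geq 2(\log K)^2/b^*$ gives $(\log K)^2/b^*-\epsilon\leq-\epsilon/2$, we obtain $q(-1/b^*)=(n/b^*)\big((\log K)^2/b^*-\epsilon\big)\leq-n\epsilon/(2b^*)$, which is the second branch.

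I expect no genuine difficulty at this stage: all the real work has already been discharged into Theorem~\ref{thm:bound_mgf}, and beneath it into Lemma~\ref{thm:opt2}, Lemma~\ref{thm:tedious}, and Lemma~\ref{control_var}, which together make the MGF bound hold uniformly over $\mathcal{C}$ even when some $p_{ik}\to0$ --- the very regime in which Theorem~\ref{thm:negative} fails, since there $b\to0$. The one point that still wants a moment's care is the case split at the threshold $2(\log K)^2/b^*$: it is the exact analogue of the threshold $2\sum_{i=1}^n\sum_{k=1}^K p_{ik}(\log p_{ik})^2/(nb)$ in Theorem~\ref{thm:negative}, and it is simply the value of $\epsilon$ at which the unconstrained optimiser $\lambda_0$ exits the feasible window $[-1/b^*,0)$ imposed by Theorem~\ref{thm:bound_mgf}.
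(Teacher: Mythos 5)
Your proof is correct and matches the paper's intended argument exactly: the paper itself states Theorem~\ref{thm:unif_left} without a displayed proof, noting only that it follows from Theorem~\ref{thm:bound_mgf} by ``the standard Chernoff bound argument for sub-exponential variables as in Theorem~\ref{thm:negative},'' and you have written out precisely that argument, including the correct case split at $\epsilon=2(\log K)^2/b^*$ where the unconstrained minimiser $\lambda_0$ exits the feasible window $[-1/b^*,0)$.
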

We conclude this section by two comments. Firstly, note that $\exp \left ( -\frac{n\epsilon^2}{4 (\log K)^2} \right )$ is usually the bound to be used when $K$ is large because  $b^*=\log K$ for large $K$ and $\epsilon \leq 2 \log K$ for small $\epsilon$. Secondly, when applying the theorem for $K=2,3,4$, simply replace $K$ by 5 in the above statement because one can make $K=5$ by adding several empty categories. See the statement in Theorem \ref{thm:main}.

\section{Extension to a misspecified model}\label{sec:extension}

In this section, we extend the main result to a misspecified likelihood function, where $\{\V{z}_i\}$ are grouped into different classes and $\{\V{z}_i\}$ within the same classes share the same $\V{p}_i$ in the likelihood function. This is a setup that frequently appears in theoretical studies of community detection, for example, Theorem 2 in \cite{choi2012stochastic}, Theorem 2 in \cite{paul2016consistent}, and Theorem 2.2 and 3.2 in \cite{zhao2020identifiability}. The definitions and results in this section closely follow Section 3 of \cite{zhao2020note} and we provide  details for completeness. 

Let 
$\V{z}_{1}^{(1)},\V{z}_2^{(1)},...,\V{z}_{n_1}^{(1)},\V{z}_{1}^{(2)},\V{z}_{2}^{(2)},...,\V{z}_{n_2}^{(2)},...,\V{z}_{1}^{(I)},\V{z}_{2}^{(I)},...,\V{z}_{n_I}^{(I)}$ be independent categorical variables, where $\V{p}_{j}^{(i)}$ is the parameter for  $\V{z}_{j}^{(i)}$, that is,  $p_{jk}^{(i)}=\mathbb{P}(\V{z}_{jk}^{(i)}=1)$. As in the previous sections, $\V{p}_{j}^{(i)}$ can be different for each variable. 

Furthermore, let $\sum_{i=1}^I n_i=n$. Let $\bar{p}^{(i)}_k$ be the average probability for category $k$ within group $i$, i.e., $\bar{p}^{(i)}_k= \frac{1}{n_i} \sum_{j=1}^{n_i}p_{jk}^{(i)}$ for $i=1,...,I, k=1,...,K$. Let $\bar{\V{p}}^{(i)}=\left (\bar{p}^{(i)}_1,...,\bar{p}^{(i)}_K \right)$. The misspecified log-likelihood  is defined as
\begin{align*}
\sum_{i=1}^I \sum_{j=1}^{n_i}  \sum_{k=1}^K   z_{jk}^{(i)} \log \bar{p}^{(i)}_k.
\end{align*}
Note that here the probability distribution of $\V{z}_{j}^{(i)}$  remains the same as elsewhere in the paper. That is, each categorical variable has its own parameters. The likelihood is, however, misspecified because $\log \bar{p}^{(i)}_k$ is assumed the same for random variables in the same group. 

\begin{theorem}[Inequalities for the misspecified model] The following statements hold true: 
	\begin{enumerate}[label=(\roman*)]
		\item 	For $K\geq 5$ and $\epsilon>0$,
		\begin{align*}	
		\sup_{\V{p}_1\in \mathcal{C},...,\V{p}_n\in \mathcal{C}}  \mathbb{P}\left ( \frac{1}{n} \sum_{i=1}^I \sum_{j=1}^{n_i}  \left ( \sum_{k=1}^K   z_{jk}^{(i)} \log \bar{p}^{(i)}_k- \sum_{k=1}^K   p_{jk}^{(i)} \log \bar{p}^{(i)}_k \right ) \geq  \epsilon \right ) \leq \exp \left ( -\frac{n \epsilon^2}{4 (\log K)^2} \right ).
		\end{align*}	
		\item  Let $b^*=1/\min \left \{ \frac{1}{\log K}, \frac{\log K+2/K-2}{(1-1/K) \log K } \right \}$. For $K\geq 5$ and $\epsilon>0$,
		\begin{align*}
		&\sup_{\V{p}_1\in \mathcal{C},...,\V{p}_n\in \mathcal{C}}  \mathbb{P}\left ( \frac{1}{n} \sum_{i=1}^I \sum_{j=1}^{n_i}  \left ( \sum_{k=1}^K   z_{jk}^{(i)} \log \bar{p}^{(i)}_k- \sum_{k=1}^K   p_{jk}^{(i)} \log \bar{p}^{(i)}_k \right ) \leq  -\epsilon \right )\\
		\leq &  \left \{ \begin{array}{ll}
		\exp \left ( -\frac{n\epsilon^2}{4 (\log K)^2} \right ) & \text{for } 0< \epsilon \leq \frac{2(\log K)^2}{b^*} \\
		\exp \left ( -\frac{n\epsilon}{2 b^*} \right ) & \text{for } \epsilon > \frac{2(\log K)^2}{b^*}.
		\end{array}
		\right . 
		\end{align*}
		\item 	If $(\log K)^2 /n =\Omega(1)$, for all $\epsilon>0$,
		\begin{align*}	
		\sup_{\V{p}_1\in \mathcal{C},...,\V{p}_n\in \mathcal{C}}  \mathbb{P}\left ( \left | \frac{1}{n} \sum_{i=1}^I \sum_{j=1}^{n_i}  \left ( \sum_{k=1}^K   z_{jk}^{(i)} \log \bar{p}^{(i)}_k- \sum_{k=1}^K   p_{jk}^{(i)} \log \bar{p}^{(i)}_k \right ) \right | \geq  \epsilon \right ) \not \rightarrow 0.
		\end{align*}
	\end{enumerate}

\end{theorem}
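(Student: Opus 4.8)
The plan is to reduce the misspecified model to the i.i.d.\ situation already handled in Section \ref{sec:main}. The point is that although within group $i$ each $\V{z}_j^{(i)}$ has its own parameter $\V{p}_j^{(i)}$, the weights $\log\bar{p}_k^{(i)}$ in the likelihood are common to the whole group, and $\bar{\V{p}}^{(i)}$, being a convex combination of elements of $\mathcal{C}$, again lies in $\mathcal{C}$. So if I can show that the product of the per-variable moment generating functions inside a group is dominated by $M_Y(\lambda,\bar{\V{p}}^{(i)})^{n_i}$ — the MGF one would get if all $n_i$ variables in group $i$ were i.i.d.\ categorical with parameter $\bar{\V{p}}^{(i)}$ — then the uniform MGF bounds \eqref{inequality_MGF} and \eqref{final_mgf} apply verbatim after taking the product over groups, and parts (i) and (ii) follow by rerunning the Chernoff arguments of Theorems \ref{thm:positive} and \ref{thm:negative}. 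Part (iii) will follow from the counterexample of Theorem \ref{thm:counter_eg}.

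Concretely, set $\tilde{Y}_j^{(i)}=\sum_{k=1}^K z_{jk}^{(i)}\log\bar{p}_k^{(i)}-\sum_{k=1}^K p_{jk}^{(i)}\log\bar{p}_k^{(i)}$, so $\mathbb{E}[e^{\lambda\tilde{Y}_j^{(i)}}]=\big(\sum_{k}p_{jk}^{(i)}(\bar{p}_k^{(i)})^{\lambda}\big)\exp\big(-\lambda\sum_k p_{jk}^{(i)}\log\bar{p}_k^{(i)}\big)$, where all sums may be restricted to indices $k$ with $\bar{p}_k^{(i)}>0$ (if $\bar{p}_k^{(i)}=0$ then $p_{jk}^{(i)}=0$ and $z_{jk}^{(i)}=0$ a.s.). The first key step is the exact telescoping $\sum_{j=1}^{n_i}p_{jk}^{(i)}=n_i\bar{p}_k^{(i)}$, which is just the definition of $\bar{p}_k^{(i)}$; applying it to the linear term after taking logarithms gives
\begin{align*}
\log\prod_{j=1}^{n_i}\mathbb{E}\big[e^{\lambda\tilde{Y}_j^{(i)}}\big]=\sum_{j=1}^{n_i}\log\Big(\sum_k p_{jk}^{(i)}(\bar{p}_k^{(i)})^{\lambda}\Big)-\lambda n_i\sum_k\bar{p}_k^{(i)}\log\bar{p}_k^{(i)}.
\end{align*}
The second key step is Jensen's inequality (concavity of $\log$) on the first term, combined once more with the averaging identity:
\begin{align*}
\frac{1}{n_i}\sum_{j=1}^{n_i}\log\Big(\sum_k p_{jk}^{(i)}(\bar{p}_k^{(i)})^{\lambda}\Big)\le\log\Big(\frac{1}{n_i}\sum_{j=1}^{n_i}\sum_k p_{jk}^{(i)}(\bar{p}_k^{(i)})^{\lambda}\Big)=\log\Big(\sum_k(\bar{p}_k^{(i)})^{\lambda+1}\Big),
\end{align*}
so that $\prod_{j=1}^{n_i}\mathbb{E}[e^{\lambda\tilde{Y}_j^{(i)}}]\le M_Y(\lambda,\bar{\V{p}}^{(i)})^{n_i}$.

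Given this domination, I would multiply over $i=1,\dots,I$ and use $\sum_i n_i=n$: for $\lambda>0$ by \eqref{inequality_MGF}, and for $\lambda\ge-\min\{(\log K)^{-1},(\log K+2/K-2)/((1-1/K)\log K)\}$ by \eqref{final_mgf}, one gets $\prod_{i=1}^{I}\prod_{j=1}^{n_i}\mathbb{E}[e^{\lambda\tilde{Y}_j^{(i)}}]\le\exp(n\lambda^2(\log K)^2)$. The right-tail bound (i) then follows from the sub-Gaussian Chernoff optimization as in Theorems \ref{thm:positive} and \ref{thm:unif_right}, with $n(\log K)^2$ in place of $\sum_i\sum_k p_{ik}(\log p_{ik})^2$, and the left-tail bound (ii) from the sub-exponential Chernoff optimization as in Theorems \ref{thm:negative} and \ref{thm:unif_left}, with the same substitution and the same $b^*$; as before, $K\in\{2,3,4\}$ is reduced to $K=5$ by padding with empty categories. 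For (iii), take the parameters constant within and across groups, $p_{jk}^{(i)}=p_k^*$ with $p_1^*=1/2$ and $p_2^*=\cdots=p_K^*=\tfrac{1}{2(K-1)}$; then $\bar{p}_k^{(i)}=p_k^*$, the misspecified log-likelihood coincides with the correctly specified one, and non-convergence is immediate from Theorem \ref{thm:counter_eg}.

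The main obstacle is the within-group reduction: seeing that the correct object to compare against is the group average $\bar{\V{p}}^{(i)}$, and that concavity of the logarithm applied to the multiplicative factor $\sum_k p_{jk}^{(i)}(\bar{p}_k^{(i)})^{\lambda}$ — while the additive factor collapses exactly via $\sum_j p_{jk}^{(i)}=n_i\bar{p}_k^{(i)}$ — produces precisely the MGF already controlled in Section \ref{sec:main}. Everything after that is a routine re-run of the earlier Chernoff computations; the only care needed is a boundary check to make sure the powers $(\bar{p}_k^{(i)})^{\lambda}$ with $\lambda<0$ and the terms $\bar{p}_k^{(i)}\log\bar{p}_k^{(i)}$ are read consistently with the convention $0\log 0=0$, which is exactly the reason for restricting the sums to $\{k:\bar{p}_k^{(i)}>0\}$ above.
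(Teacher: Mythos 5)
Your proposal is correct and follows essentially the same route as the paper: the paper phrases your key reduction step as the arithmetic--geometric mean inequality applied to the factors of $\prod_{j=1}^{n_i}\mathbb{E}[e^{\lambda\tilde Y_j^{(i)}}]$, which after taking logarithms is precisely your application of Jensen's inequality to the concave logarithm. The remaining steps --- collapsing the averages to $\bar{\V{p}}^{(i)}$ via $\sum_j p_{jk}^{(i)}=n_i\bar p_k^{(i)}$, invoking Theorem \ref{thm:bound_mgf}, multiplying over groups with $\sum_i n_i=n$, re-running the Chernoff optimizations of Theorems \ref{thm:unif_right} and \ref{thm:unif_left}, and reducing (iii) to Theorem \ref{thm:counter_eg} by taking all parameters equal --- also coincide with the paper's argument.
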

\begin{proof}
	Let $$U^{(i)}=\sum_{j=1}^{n_i}  \left (\sum_{k=1}^K   z_{jk}^{(i)} \log \bar{p}^{(i)}_k- \sum_{k=1}^K   p_{jk}^{(i)} \log \bar{p}^{(i)}_k \right )=\sum_{j=1}^{n_i}  \left (\sum_{k=1}^K   z_{jk}^{(i)} \log \bar{p}^{(i)}_k- \sum_{k=1}^K   \bar{p}^{(i)}_k \log \bar{p}^{(i)}_k \right ).$$
	\begin{align*}
	\mathbb{E}\left [e^{\lambda U^{(i)}} \right ] & =\prod_{j=1}^{n_i} \left [   \left ( \sum_{k=1}^K p_{jk}^{(i)} \exp \left (\lambda \log \bar{p}^{(i)}_k \right ) \right ) \exp \left (-\lambda \sum_{k=1}^K  \bar{p}^{(i)}_k \log \bar{p}^{(i)}_k \right )\right ] \\	
	& \leq \left [\frac{1}{n_i} \sum_{j=1}^{n_i}  \left ( \sum_{k=1}^K p_{jk}^{(i)} \exp \left (\lambda \log \bar{p}^{(i)}_k \right ) \right ) \exp \left (-\lambda \sum_{k=1}^K  \bar{p}^{(i)}_k \log \bar{p}^{(i)}_k \right )\right ]^{n_i} \\
	& = \left [   \left ( \sum_{k=1}^K \bar{p}^{(i)}_k \exp \left (\lambda \log \bar{p}^{(i)}_k \right ) \right ) \exp \left (-\lambda \sum_{k=1}^K  \bar{p}^{(i)}_k \log \bar{p}^{(i)}_k \right )\right ]^{n_i} \\ 
	& =[M_Y(\lambda,\bar{\V{p}}^{(i)})]^{n_i},
	\end{align*}
	where the inequality follows from the inequality of arithmetic and geometric means: $ \sqrt[n]{\prod_{i=1}^n a_i} \leq \sum_{i=1}^n a_i/ n$ for non-negative $a_1,...,a_n$.
	
	By Theorem  \ref{thm:bound_mgf}, for $K\geq 5$ and $\lambda \geq -\min \left \{ \frac{1}{\log K},  \frac{\log K+2/K-2}{(1-1/K) \log K } \right \}$, 
	\begin{align*}
	[M_Y(\lambda,\bar{\V{p}}^{(i)})]^{n_i}\leq \exp \left (  n_i \lambda^2 (\log K)^2 \right ),
	\end{align*}
	and
	\begin{align*}
	\prod_{i=1}^I \mathbb{E}\left [e^{\lambda U^{(i)}} \right ] \leq \exp \left ( \sum_{i=1}^I n_i \lambda^2 (\log K)^2 \right )=\exp \left (n \lambda^2 (\log K)^2 \right ).
	\end{align*}
	Statement (\romannumeral 1) and (\romannumeral 2) follow immediately. The proof of (\romannumeral 3) is identical to Theorem \ref{thm:counter_eg}.
\end{proof}

\section{Applications in information theory}\label{sec:application}
As mentioned in Section \ref{sec:motivation}, our main result is a refinement of \eqref{aep},  called the asymptotic equipartition property (AEP) in information theory. The AEP is the foundation
of many important results in this field \cite{cover2006elements}. We give two examples of the applications of the new result in information theory:  we prove a refined explicit bound for sample size $n$ in Shannon's source coding theorem, which is a fundamental result in information theory, and derive the error exponent for source coding  in the high-dimensional setting. 

Before proceeding with applications, we restate Theorem \ref{thm:main} in terms of ``$\log_2$'' because \textit{bit} is usually used as the unit of information in practice. The proof is trivial by replacing $\epsilon$ with $\epsilon \log 2 $ in Theorem \ref{thm:unif_right} and \ref{thm:unif_left}.
\begin{corollary}\label{thm:bit}
	For $K\geq 5$ and $\epsilon>0$,
	\begin{align}	
	\sup_{\V{p}_1\in \mathcal{C},...,\V{p}_n\in \mathcal{C}} \mathbb{P}\left (  \frac{1}{n} \sum_{i=1}^n \left ( \sum_{k=1}^K   z_{ik} \log_2 p_{ik} - \sum_{k=1}^K   p_{ik} \log_2 p_{ik} \right )  \geq  \epsilon \right ) \leq \exp \left ( -\frac{n \epsilon^2}{4 (\log_2 K)^2} \right ). \label{bit_right}
	\end{align}	
	For $K\geq 5$ and sufficiently small positive $\epsilon$,
	\begin{align}
	&\sup_{\V{p}_1\in \mathcal{C},...,\V{p}_n\in \mathcal{C}} \mathbb{P}\left ( \frac{1}{n} \sum_{i=1}^n \left ( \sum_{k=1}^K   z_{ik} \log_2 p_{ik} - \sum_{k=1}^K   p_{ik} \log_2 p_{ik} \right )  \leq  -\epsilon \right ) \leq \exp \left ( -\frac{n \epsilon^2}{4 (\log_2 K)^2} \right ). \label{bit_left}
	\end{align}
\end{corollary}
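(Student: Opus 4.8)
\textbf{Proof proposal for Corollary \ref{thm:bit}.}
The plan is to reduce both inequalities to Theorems \ref{thm:unif_right} and \ref{thm:unif_left} by a scalar change of units. The key identity is that for any $p \in (0,1]$ we have $\log_2 p = (\log p)/\log 2$, so
\begin{align*}
\sum_{k=1}^K z_{ik} \log_2 p_{ik} - \sum_{k=1}^K p_{ik} \log_2 p_{ik} = \frac{1}{\log 2}\left( \sum_{k=1}^K z_{ik} \log p_{ik} - \sum_{k=1}^K p_{ik} \log p_{ik} \right),
\end{align*}
and this carries over to the sample average over $i=1,\dots,n$. Consequently the event $\{\frac{1}{n}\sum_i(\sum_k z_{ik}\log_2 p_{ik} - \sum_k p_{ik}\log_2 p_{ik}) \geq \epsilon\}$ coincides with the event $\{\frac{1}{n}\sum_i(\sum_k z_{ik}\log p_{ik} - \sum_k p_{ik}\log p_{ik}) \geq \epsilon\log 2\}$, and likewise for the left tail with $-\epsilon$ and $-\epsilon\log 2$.

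For the right-tail bound \eqref{bit_right}, I would apply Theorem \ref{thm:unif_right} with $\epsilon$ replaced by $\epsilon \log 2$, valid for all $\epsilon>0$ since $\epsilon \log 2 > 0$. This gives an upper bound $\exp(-n(\epsilon\log 2)^2/(4(\log K)^2))$, and then I would simplify using $(\log 2)^2/(\log K)^2 = 1/(\log K/\log 2)^2 = 1/(\log_2 K)^2$ to obtain exactly $\exp(-n\epsilon^2/(4(\log_2 K)^2))$.

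For the left-tail bound \eqref{bit_left}, I would apply Theorem \ref{thm:unif_left} with $\epsilon$ replaced by $\epsilon\log 2$. The only point requiring a word of care is that Theorem \ref{thm:unif_left} splits into two regimes; the sub-Gaussian bound $\exp(-n(\epsilon\log 2)^2/(4(\log K)^2))$ holds in the regime $0 < \epsilon \log 2 \leq 2(\log K)^2/b^*$, i.e. $0 < \epsilon \leq 2(\log K)^2/(b^*\log 2)$, which is precisely what "sufficiently small positive $\epsilon$" in the statement refers to. In that regime the same algebraic simplification as above converts the exponent to $-n\epsilon^2/(4(\log_2 K)^2)$, finishing the proof. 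There is no genuine obstacle here — the entire content is the unit conversion $\log \to \log_2$ and bookkeeping of the resulting constant; the only thing to flag is the restriction on $\epsilon$ inherited from the sub-exponential/sub-Gaussian split in Theorem \ref{thm:unif_left}.
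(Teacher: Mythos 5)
Your proof is correct and matches the paper's argument exactly: the paper likewise proves the corollary by replacing $\epsilon$ with $\epsilon \log 2$ in Theorems \ref{thm:unif_right} and \ref{thm:unif_left} and simplifying the resulting exponent. You have also correctly identified the source of the "sufficiently small $\epsilon$" restriction as the sub-Gaussian/sub-exponential split in Theorem \ref{thm:unif_left}.
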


Recall that $X$ is a discrete random variable taking values in a finite alphabet $\mathcal{X}=\{x_1,...,x_K\}$ with $\V{p}=\{p_1,...,p_K\}$. Let $\V{X}^{n}=(X_1,...,X_n)$ be i.i.d. copies of $X$ and $\V{x}^n$ be a realization in $\mathcal{X}^n$. For the simplicity of notation, we only consider the i.i.d. case in this section. An $n$-to-$m$ binary block code \cite{csiszar2011information} consists a pair of coder and decoder
\begin{align*}
f: \mathcal{X}^n \rightarrow \{0,1\}^m, \,\, \varphi: \{0,1\}^m \rightarrow \mathcal{X}^n,
\end{align*}
where $f$ maps each $\V{x}^n$ to an $m$-length 0-1 sequence, and $\varphi$ maps each $m$-length 0-1 sequence to a certain $\V{x}^n$. 
If one requires $(f,\varphi)$ to be error-free, i.e., $f$ to be injective on $\mathcal{X}^n$, then clearly $m$ should be at least $\lceil n \log_2 K  \rceil$, where $\lceil \cdot \rceil$ means rounding up to the next integer. But if one can tolerate an arbitrarily small error, Shannon showed that essentially  $n H$ bits are needed in his foundational paper \cite{shannon1948mathematical}, where $H=-\sum_{k=1}^K p_k \log_2 p_k$. The result is called Shannon's source coding theorem. Here we follow the version in \cite{mackay2003information}. 

Let $S_\delta^n$ be the smallest subset of $\mathcal{X}^n$ satisfying: 
\begin{align*}
\mathbb{P}( \V{X}^n \in S_\delta^n) \geq 1-\delta,
\end{align*}
and the essential bit content of $\mathcal{X}^n$ is defined as
\begin{align*}
H_\delta(\mathcal{X}^n) = \log_2 |S_\delta^n|.
\end{align*}
\begin{theorem} [Shannon's source coding theorem (\cite{mackay2003information}, Theorem 4.1)]\label{thm:source_orginal}
	For  $0<\delta<1$ and $\epsilon>0$, there exists a positive integer $n_0$ such that for $n>n_0$, 
	\begin{align}
	\left | \frac{1}{n} H_\delta(\mathcal{X}^n)-H   \right | < \epsilon. \label{essential_bit}
	\end{align}
\end{theorem}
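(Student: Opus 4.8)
The plan is to derive Shannon's source coding theorem from the two-sided concentration bound in Corollary \ref{thm:bit}, using the classical typical-set argument. Define the $\beta$-typical set
\begin{align*}
A_\beta^n = \left\{ \V{x}^n \in \mathcal{X}^n : \left| -\frac{1}{n} \log_2 \mathbb{P}(\V{X}^n = \V{x}^n) - H \right| < \beta \right\},
\end{align*}
where $\mathbb{P}(\V{X}^n = \V{x}^n) = \prod_{i=1}^n p_{x_i}$, so that $-\frac{1}{n}\log_2 \mathbb{P}(\V{X}^n=\V{x}^n) = -\frac{1}{n}\sum_{i=1}^n \log_2 p_{X_i}$ is exactly the average negative log-likelihood appearing in Corollary \ref{thm:bit} (with all $\V{p}_i$ equal to $\V{p}$). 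First I would record the two standard consequences of the typicality definition: every $\V{x}^n \in A_\beta^n$ satisfies $2^{-n(H+\beta)} < \mathbb{P}(\V{X}^n=\V{x}^n) < 2^{-n(H-\beta)}$, which upon summing the lower bound over $A_\beta^n$ gives the cardinality bound $|A_\beta^n| < 2^{n(H+\beta)}$, and (conditionally, on the event that $\mathbb{P}(\V{X}^n \in A_\beta^n)$ is close to $1$) summing the upper bound gives $|A_\beta^n| > (1-\delta')\, 2^{n(H-\beta)}$ for any lower bound $1-\delta'$ on that probability.

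Next I would quantify $\mathbb{P}(\V{X}^n \notin A_\beta^n)$ using Corollary \ref{thm:bit}. Applying \eqref{bit_right} and \eqref{bit_left} with the single common parameter $\V{p}$ and with $\epsilon = \beta$ (taking $\beta$ small enough for \eqref{bit_left} to apply, and $K\ge 5$ — for $K<5$ pad with empty categories as the paper notes), we get
\begin{align*}
\mathbb{P}(\V{X}^n \notin A_\beta^n) \le 2\exp\left( -\frac{n\beta^2}{4(\log_2 K)^2} \right).
\end{align*}
Given the target $\delta$ and $\epsilon$, I would set $\beta = \epsilon$ (or anything strictly less than $\epsilon$) and then choose $n_0$ large enough that the right-hand side is below $\delta$ for all $n > n_0$; explicitly $n_0 = \lceil 4(\log_2 K)^2 \beta^{-2}\log(2/\delta)\rceil$ works. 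For such $n$, $\mathbb{P}(\V{X}^n \in A_\beta^n) \ge 1-\delta$.

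Finally I would translate this into a bound on $H_\delta(\mathcal{X}^n) = \log_2 |S_\delta^n|$. For the upper bound: since $A_\beta^n$ is itself a set of probability at least $1-\delta$ and $S_\delta^n$ is the \emph{smallest} such set, $|S_\delta^n| \le |A_\beta^n| < 2^{n(H+\beta)}$, hence $\frac1n H_\delta(\mathcal{X}^n) < H + \beta \le H + \epsilon$. For the lower bound one argues that $S_\delta^n$ cannot be too small: if $|S_\delta^n| \le 2^{n(H-\epsilon)}$ then, because points outside $A_\beta^n$ together carry probability at most $\delta$ (for $n>n_0$) and points inside $A_\beta^n$ each carry probability less than $2^{-n(H-\beta)}$, the total probability captured by $S_\delta^n$ would be at most $\delta + 2^{n(H-\epsilon)}2^{-n(H-\beta)} = \delta + 2^{-n(\epsilon-\beta)}$; choosing $\beta<\epsilon$ and enlarging $n_0$ so that $2^{-n(\epsilon-\beta)} < 1-2\delta$ (possible since $1-2\delta>0$ for, say, $\delta<1/2$; for $\delta\in[1/2,1)$ a slightly different bookkeeping with an intermediate probability level handles it) makes this strictly less than $1-\delta$, contradicting $\mathbb{P}(\V{X}^n\in S_\delta^n)\ge 1-\delta$. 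Hence $\frac1n H_\delta(\mathcal{X}^n) > H-\epsilon$, and combining the two bounds gives \eqref{essential_bit}. The only genuinely delicate point — and the main obstacle — is the lower bound: one must be careful that the constant gap $\epsilon - \beta$ between the exponents, together with the exponentially small concentration tail from Corollary \ref{thm:bit}, really does force $\mathbb{P}(\V{X}^n \in S_\delta^n)$ below $1-\delta$ for all large $n$, and that the bookkeeping is uniform over the range of $\delta$ claimed; everything else is the routine typical-set computation.
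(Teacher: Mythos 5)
Your approach is essentially the one the paper takes — the paper does not prove Theorem~\ref{thm:source_orginal} directly (it is cited from MacKay), but the proof you propose is effectively what the paper carries out for the quantitative refinement in Theorem~\ref{thm:refine}: define typical sets, invoke Corollary~\ref{thm:bit} for the tail probabilities, bound cardinalities, and compare against the minimality of $S_\delta^n$. The one place your bookkeeping diverges and actually costs you something is the lower bound on $H_\delta(\mathcal{X}^n)$. You reuse the same threshold $\delta$ both to certify $\mathbb{P}(\V{X}^n \in A_\beta^n)\ge 1-\delta$ and to control the leakage outside the typical set inside $S_\delta^n$, which forces the inequality $\delta + 2^{-n(\epsilon-\beta)} < 1-\delta$ and hence $\delta < 1/2$; you flag this and wave at a fix, but do not carry it out. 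The paper's Theorem~\ref{thm:refine} sidesteps this entirely by not tying the tail-probability budget to $\delta$: it shows any set $S'$ with $|S'|\le 2^{n(H-\epsilon)}$ satisfies $\mathbb{P}(\V{X}^n\in S')\le 2^{-n\epsilon/2}+\exp\bigl(-n\epsilon^2/(16(\log_2 K)^2)\bigr)$, and then chooses $n$ large enough that each summand is below $(1-\delta)/2$, which works uniformly for all $\delta\in(0,1)$. The paper also uses two one-sided typical sets with exponents $\epsilon$ and $\epsilon/2$ (rather than one two-sided set at level $\beta<\epsilon$), which makes the exponential gap $2^{-n\epsilon/2}$ explicit and yields the stated closed-form bounds on $n_0$. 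So your proposal is sound in spirit and would close with the fix you gesture at, but the paper's split of the error budget is the cleaner route and is what you would want to adopt to handle all $\delta\in(0,1)$ without a case analysis.
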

The key ingredient of the proof in \cite{mackay2003information} relies on Chebyshev's inequality \eqref{cheby}. We show in the next theorem how to apply the exponential decay bound in Corollary \ref{thm:bit} to derive an improved lower bound for $n$, which does not depends on $\V{p}$. 
\begin{theorem} [Refinement of Shannon's source coding theorem]\label{thm:refine}
	For $K\geq 5$, $0<\delta<1$, and sufficiently small positive $\epsilon$,
	\begin{align*}
	\frac{1}{n} H_\delta(\mathcal{X}^n)  <H+\epsilon, & \textnormal{ for } n>\frac{4 (\log_2 K)^2 \log (1/\delta)}{\epsilon^2}, \\
	\frac{1}{n} H_\delta(\mathcal{X}^n)  >H-\epsilon, & \textnormal{ for }  n> \max \left \{ \frac{2\log_2(2/(1-\delta))}{\epsilon} ,\frac{16 (\log_2 K)^2 \log (2/(1-\delta))}{\epsilon^2} \right \}. 
	\end{align*}
\end{theorem}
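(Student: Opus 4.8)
The plan is to connect the essential bit content $H_\delta(\mathcal{X}^n)$ to the typical set defined through the empirical average of $-\log_2 p(X_i)$, and then apply the two-sided concentration bound of Corollary \ref{thm:bit} (specialized to the i.i.d.\ case, $\V{p}_1=\cdots=\V{p}_n=\V{p}$, $K\geq 5$) to control the probability mass and cardinality of that typical set. Concretely, for a threshold $\epsilon>0$ define the typical set
\begin{align*}
A_\epsilon^n=\left\{\V{x}^n\in\mathcal{X}^n:\left|-\frac{1}{n}\sum_{i=1}^n\log_2 p(x_i)-H\right|<\epsilon\right\}.
\end{align*}
The quantity $-\frac{1}{n}\sum_{i=1}^n\log_2 p(x_i)$ is exactly $-\frac{1}{n}\sum_{i=1}^n\sum_{k=1}^K z_{ik}\log_2 p_k$, whose centered version is the object bounded in \eqref{bit_right}--\eqref{bit_left}. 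Every $\V{x}^n\in A_\epsilon^n$ satisfies $2^{-n(H+\epsilon)}<p(\V{x}^n)<2^{-n(H-\epsilon)}$, which gives the standard sandwich
\begin{align*}
\mathbb{P}(\V{X}^n\in A_\epsilon^n)\cdot 2^{n(H-\epsilon)}\geq |A_\epsilon^n|\geq \mathbb{P}(\V{X}^n\in A_\epsilon^n)\cdot 2^{n(H+\epsilon)},
\end{align*}
so $|A_\epsilon^n|\le 2^{n(H+\epsilon)}$ always, and $|A_\epsilon^n|\ge (1-\delta')2^{n(H-\epsilon)}$ whenever $\mathbb{P}(\V{X}^n\in A_\epsilon^n)\ge 1-\delta'$.

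For the upper bound on $\frac{1}{n}H_\delta(\mathcal{X}^n)$: I would apply the left-tail bound \eqref{bit_left} with tolerance $\epsilon$ to get $\mathbb{P}\big(-\frac{1}{n}\sum_i\log_2 p(X_i)>H+\epsilon\big)\le\exp(-n\epsilon^2/(4(\log_2 K)^2))$. Call this upper probability $\delta$; it is $<\delta$ precisely when $n>4(\log_2 K)^2\log(1/\delta)/\epsilon^2$. On the complementary event the set $B=\{\V{x}^n: -\frac1n\sum\log_2 p(x_i)\le H+\epsilon\}$ has probability $\ge 1-\delta$, and every element of $B$ has $p(\V{x}^n)\ge 2^{-n(H+\epsilon)}$, so $|B|\le 2^{n(H+\epsilon)}$. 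Since $S_\delta^n$ is the \emph{smallest} set of probability $\ge 1-\delta$, we get $H_\delta(\mathcal{X}^n)=\log_2|S_\delta^n|\le\log_2|B|\le n(H+\epsilon)$, giving the first claim.

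For the lower bound on $\frac1n H_\delta(\mathcal{X}^n)$: I would use both tails of Corollary \ref{thm:bit} with a smaller tolerance, say $\epsilon/2$, to ensure $\mathbb{P}(\V{X}^n\notin A_{\epsilon/2}^n)\le 2\exp(-n(\epsilon/2)^2/(4(\log_2 K)^2))=2\exp(-n\epsilon^2/(16(\log_2 K)^2))$ — here the factor $16$ and the $2$ inside the log in the hypothesis come from halving $\epsilon$ and summing the two tails; one checks this is $<(1-\delta)/2$ when $n>16(\log_2 K)^2\log(2/(1-\delta))/\epsilon^2$. Then $\mathbb{P}(\V{X}^n\in A_{\epsilon/2}^n)>(1+\delta)/2$, and any set $S_\delta^n$ of probability $\ge 1-\delta$ must overlap $A_{\epsilon/2}^n$ in probability at least $(1+\delta)/2-\delta=(1-\delta)/2$; since each element of $A_{\epsilon/2}^n$ has probability $<2^{-n(H-\epsilon/2)}$, this forces $|S_\delta^n|\ge\frac{1-\delta}{2}\cdot 2^{n(H-\epsilon/2)}$, i.e.\ $H_\delta(\mathcal{X}^n)\ge n(H-\epsilon/2)+\log_2\frac{1-\delta}{2}$. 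Finally $\log_2\frac{1-\delta}{2}=-\log_2\frac{2}{1-\delta}\ge -n\epsilon/2$ exactly when $n\ge 2\log_2(2/(1-\delta))/\epsilon$, which absorbs the residual term and yields $\frac1n H_\delta(\mathcal{X}^n)>H-\epsilon$. The two conditions on $n$ combine into the stated maximum.

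The routine parts are the probability-mass/cardinality sandwiches and the bookkeeping that converts each exponential tail bound into an explicit threshold for $n$. The one genuinely delicate point — and the step I would be most careful with — is the lower-bound argument: one must use the \emph{minimality} of $S_\delta^n$ correctly in the right direction. For the upper bound minimality helps directly (a concrete set of the right size caps $H_\delta$), but for the lower bound I instead need that \emph{every} admissible set is large, which requires the overlap estimate $\mathbb{P}(S_\delta^n\cap A_{\epsilon/2}^n)\ge\mathbb{P}(A_{\epsilon/2}^n)-\mathbb{P}((S_\delta^n)^c)$ together with the per-atom probability upper bound on $A_{\epsilon/2}^n$; getting the constants $1/2$, $2/(1-\delta)$ and the factor $16$ to line up with the statement is where the care is needed, and it is exactly why the tolerance is split as $\epsilon/2$ and the extra linear-in-$n$ condition $n>2\log_2(2/(1-\delta))/\epsilon$ appears.
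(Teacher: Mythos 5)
Your overall strategy is the same as the paper's (typical-set argument built on Corollary~\ref{thm:bit}), and the upper bound is essentially correct and matches the paper's argument (the paper's $T^n_{1,\epsilon}$ is your set $B$ up to a strict/non-strict inequality). The lower-bound strategy is also the same idea as the paper's, but there is a concrete bookkeeping error there that means the proof as written does \emph{not} give the constants in the theorem.

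The error is the use of a \emph{two-sided} typical set $A_{\epsilon/2}^n$, which forces the factor $2$ in $\mathbb{P}(\V{X}^n\notin A_{\epsilon/2}^n)\le 2\exp(-n\epsilon^2/(16(\log_2 K)^2))$. You then assert this is $<(1-\delta)/2$ when $n>16(\log_2 K)^2\log(2/(1-\delta))/\epsilon^2$; but under that condition you only get $\exp(-n\epsilon^2/(16(\log_2 K)^2))<(1-\delta)/2$, hence $2\exp(\cdots)<1-\delta$ and $\mathbb{P}(A_{\epsilon/2}^n)>\delta$ --- which is not $>(1+\delta)/2$, so the overlap estimate $\mathbb{P}(S_\delta^n\cap A_{\epsilon/2}^n)\ge(1-\delta)/2$ collapses. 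To salvage your route you would need $n>16(\log_2 K)^2\log(4/(1-\delta))/\epsilon^2$, which is strictly stronger than the stated threshold. The paper avoids this by using the \emph{one-sided} set $T^n_{2,\epsilon/2}=\{\V{x}^n:\frac{1}{n}\log_2\frac{1}{P(\V{x}^n)}-H>-\epsilon/2\}$, since the lower-bound argument only uses the per-atom probability upper bound $P(\V{x}^n)<2^{-n(H-\epsilon/2)}$ and hence only needs control of the \emph{right} tail \eqref{bit_right}; then $\mathbb{P}(\V{X}^n\notin T^n_{2,\epsilon/2})\le\exp(-n\epsilon^2/(16(\log_2 K)^2))$ with no factor $2$, and the stated condition on $n$ suffices. (Your overlap argument is otherwise sound and is logically equivalent to the paper's formulation --- the paper shows directly that any $S'$ with $|S'|\le 2^{n(H-\epsilon)}$ has $\mathbb{P}(S')<1-\delta$ via the split $\mathbb{P}(S')\le\mathbb{P}(S'\cap T^n_{2,\epsilon/2})+\mathbb{P}(\notin T^n_{2,\epsilon/2})<2^{-n\epsilon/2}+\exp(-n\epsilon^2/(16(\log_2 K)^2))$, which is just the contrapositive of your overlap estimate with the one-sided set.) So: replace $A_{\epsilon/2}^n$ by the one-sided set and the constants will match.
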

\begin{proof}
	Define $T^n_{1,\epsilon}$ as 
	\begin{align*}
	T^n_{1,\epsilon} = \left \{ \V{x}^n \in \mathcal{X}^n:   \frac{1}{n} \log_2 \frac{1}{P(\V{x}^n)} -H  < \epsilon    \right \},
	\end{align*}
	where $P(\V{x}^n)=\mathbb{P}( \V{X}^n = \V{x}^n)$. 
	For all $\V{x}^n \in T^n_{1,\epsilon}$, it satisfies that
	\begin{align*}
	P(\V{x}^n) > 2^{-n (H+\epsilon)}. 
	\end{align*}
	And by \eqref{bit_left},
	\begin{align*}
	\mathbb{P} \left ( \V{X}^n \notin T^n_{1,\epsilon} \right  )  \leq  \exp \left ( -\frac{n \epsilon^2}{4 (\log_2 K)^2} \right ), 
	\end{align*}
	which implies $$\textnormal{for } n>\frac{4 (\log_2 K)^2 \log (1/\delta)}{\epsilon^2},$$ 
	$\mathbb{P} \left ( \V{X}^n \in T^n_{1,\epsilon} \right  ) \geq 1-\delta$. It further implies 
	$H_\delta(\mathcal{X}^n) \leq \log_2 |T^n_{1,\epsilon}| $ because $S_\delta^n$ is the smallest subset with probability greater than $1-\delta$. 
	
	Because for all $\V{x}^n \in T^n_{1,\epsilon}$, $ P(\V{x}^n)> 2^{-n (H+\epsilon)} $ and $\mathbb{P}(\V{X}^n \in T^n_{1,\epsilon} ) \leq 1$, we have 
	\begin{align*}
	|T^n_{1,\epsilon}| < 2^{n(H+\epsilon)}. 
	\end{align*}
	Therefore, 
	\begin{align*}
	H_\delta(\mathcal{X}^n) \leq \log_2 |T^n_{1,\epsilon}| <n(H+\epsilon), \textnormal{ for } n>\frac{4 (\log_2 K)^2 \log (1/\delta)}{\epsilon^2}. 
	\end{align*}
	
	We now prove the second part. 		Define $T^n_{2,\epsilon}$ as 
	\begin{align*}
	T^n_{2,\epsilon} = \left \{ \V{x}^n \in \mathcal{X}^n:   \frac{1}{n} \log_2 \frac{1}{P(\V{x}^n)} -H  > -\epsilon    \right \},
	\end{align*}
	Let $S'$ be any subset satisfying $|S'| \leq 2^{n(H-\epsilon)}$. Notice 
	\begin{align*}
	\mathbb{P}(\V{X}^n \in S') \leq \mathbb{P}(\V{X}^n \in S' \cap T^n_{2,\epsilon/2} )+\mathbb{P}(\V{X}^n \notin  {T^n_{2,\epsilon/2}} ). 
	\end{align*}
	The second term is bounded by $ \exp \left ( -\frac{n \epsilon^2}{16 (\log_2 K)^2} \right )$. We  bound the first term. 
	Because $|S' \cap T^n_{\epsilon/2}| \leq |S'| \leq 2^{n(H-\epsilon)} $ and $P(\V{x}^n) < 2^{-n (H-\epsilon/2)}$ for $\V{x}^n \in T^n_{2,\epsilon/2}$, 
	\begin{align*}
	\mathbb{P}(\V{X}^n \in S' \cap T^n_{2,\epsilon/2}) < 2^{n(H-\epsilon)} 2^{-n (H-\epsilon/2)} = 2^{-n \epsilon/2}. 
	\end{align*}
	It follow that
	\begin{align*}
	\mathbb{P}(\V{X}^n \in S') \leq 2^{-n \epsilon/2}+\exp \left ( -\frac{n \epsilon^2}{16 (\log_2 K)^2} \right ), 
	\end{align*}
	which implies $\mathbb{P} \left ( \V{X}^n \in S' \right  )< 1-\delta$,
	\begin{align*}
	\textnormal{for } n> \max \left \{ \frac{2\log_2(2/(1-\delta))}{\epsilon} ,\frac{16 (\log_2 K)^2 \log (2/(1-\delta))}{\epsilon^2} \right \}.
	\end{align*}
	Therefore,  $S_\delta^n$ must contain  more than $2^{n(H-\epsilon)}$ elements because it is required $\mathbb{P}( \V{X}^n \in S_\delta^n) \geq 1-\delta$, that is,
	\begin{align*}
	H_\delta(\mathcal{X}^n) > n(H-\epsilon).
	\end{align*}
\end{proof}
Intuitively speaking, the first part of Theorem \ref{thm:refine} tells us the number of bits does not need to exceed $n(H+\epsilon)$ for sufficiently large   $n$  even if the error $\delta$ is close to 0. Conversely, the second part tells us the number of bits cannot be smaller than $n(H-\epsilon)$ even if the error $\delta$ is close to 1. The technique used in the proof is called the method of \textit{typical sequences} \cite{cover2006elements}. A typical sequence is  $\V{x}^n$ that belongs to the \textit{typical set} $T^n_{1,\epsilon}\cap T^n_{2,\epsilon}$. The key ingredient of the proof of Theorem \ref{thm:refine} is the application of Corollary \ref{thm:bit}, which provides a much sharper bound for the probability of $\V{x}^n$ being outside of the typical set than the classical Chebyshev's inequality. The lower bound of $n$ therefore increases logarithmically as $\delta$ decreases in the first part and as $\delta$ increases in the second part, which are much slower than the rate  derived from Chebyshev's inequality. Moreover, the lower bound of $n$ increases on the order of $(\log_2 K)^2$ as $K$ grows, which is the slowest possible rate according to Theorem \ref{thm:counter_eg}. 

The method of typical sequences is a commonly-used proof technique for many important results in information theory \cite{cover2006elements,csiszar2011information}. The goal of this paper was exactly to bound the probability of such sequences. Specifically, we proved a rate-optimal exponential decay bound for the probability of a sequence not belonging to the typical set. Therefore, we expect that the new result and its generalizations can be used to sharpen the bounds in many information-theoretic results, for example, for variable-length codes and noisy channels. 

Next we use the method of typical sequences to prove a result on the error exponent of block codes in the high-dimensional setting.
The \textit{probability of error} of the code $(f,\varphi)$ is defined as \cite{csiszar2011information}:
\begin{align*}
e(f,\varphi)= \mathbb{P}( \varphi(f(\V{x}^n)) \neq \V{x}^n). 
\end{align*}
If $e(f,\varphi)$ drops as $e^{-n\alpha}$, $\alpha$ is called  \textit{error exponent}. Below we prove a result concerning $e(f,\varphi)$ and the error exponent, which are  uniform on $\V{p}$. 
\begin{proposition}\label{exponent}
	For $K\geq 5$ and sufficiently small positive $\epsilon>0$, there exists a block code $(f,\varphi)$ 
	\begin{align*}
	f: \mathcal{X}^n \rightarrow \{0,1\}^{\lceil n(H+\epsilon) \rceil}, \,\, \varphi: \{0,1\}^{\lceil n(H+\epsilon) \rceil} \rightarrow \mathcal{X}^n
	\end{align*}
	with the probability of error satisfying
	\begin{align*}
	e(f,\varphi) \leq \exp \left ( -\frac{n \epsilon^2}{4 (\log_2 K)^2} \right ).
	\end{align*}
\end{proposition}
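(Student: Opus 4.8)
The plan is to run the classical packing argument for source coding, using the set of typical sequences and the left-tail bound \eqref{bit_left} of Corollary \ref{thm:bit} in place of Chebyshev's inequality. First I would reuse the set $T^n_{1,\epsilon}=\left\{\V{x}^n\in\mathcal{X}^n:\frac{1}{n}\log_2\frac{1}{P(\V{x}^n)}-H<\epsilon\right\}$ already introduced in the proof of Theorem \ref{thm:refine}. Every $\V{x}^n\in T^n_{1,\epsilon}$ satisfies $P(\V{x}^n)>2^{-n(H+\epsilon)}$; summing this strict inequality over $T^n_{1,\epsilon}$ and using $\mathbb{P}(\V{X}^n\in T^n_{1,\epsilon})\leq 1$ gives $|T^n_{1,\epsilon}|<2^{n(H+\epsilon)}\leq 2^{\lceil n(H+\epsilon)\rceil}$, exactly the counting bound from the first half of the proof of Theorem \ref{thm:refine}.

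Next I would construct the code. Since $|T^n_{1,\epsilon}|<2^{\lceil n(H+\epsilon)\rceil}$, there is an injection of $T^n_{1,\epsilon}$ into $\{0,1\}^{\lceil n(H+\epsilon)\rceil}$; extend it arbitrarily to a map $f:\mathcal{X}^n\rightarrow\{0,1\}^{\lceil n(H+\epsilon)\rceil}$. Let $\varphi$ be the inverse of $f$ on the image $f(T^n_{1,\epsilon})$ and arbitrary on the remaining codewords. Then $\varphi(f(\V{x}^n))=\V{x}^n$ for every $\V{x}^n\in T^n_{1,\epsilon}$, so an error can occur only when $\V{X}^n\notin T^n_{1,\epsilon}$, whence $e(f,\varphi)\leq\mathbb{P}(\V{X}^n\notin T^n_{1,\epsilon})$.

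Finally I would bound $\mathbb{P}(\V{X}^n\notin T^n_{1,\epsilon})$ by the concentration inequality. In the i.i.d.\ case, $\frac{1}{n}\log_2\frac{1}{P(\V{X}^n)}-H=-\frac{1}{n}\sum_{i=1}^n\left(\sum_{k=1}^K z_{ik}\log_2 p_k-\sum_{k=1}^K p_k\log_2 p_k\right)$, so the event $\V{X}^n\notin T^n_{1,\epsilon}$ is precisely $\frac{1}{n}\sum_{i=1}^n\left(\sum_{k=1}^K z_{ik}\log_2 p_k-\sum_{k=1}^K p_k\log_2 p_k\right)\leq-\epsilon$. Applying \eqref{bit_left} then gives $\mathbb{P}(\V{X}^n\notin T^n_{1,\epsilon})\leq\exp\!\left(-\frac{n\epsilon^2}{4(\log_2 K)^2}\right)$, which is the claimed bound on $e(f,\varphi)$.

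There is no genuine obstacle here: the argument is a counting/packing step followed by a single invocation of an already-proved tail bound. The only points requiring attention are that the counting inequality relies on the strict inequality $P(\V{x}^n)>2^{-n(H+\epsilon)}$ on $T^n_{1,\epsilon}$ (valid for all $\epsilon>0$), and that the restriction to sufficiently small positive $\epsilon$ is inherited directly from the hypothesis of \eqref{bit_left}.
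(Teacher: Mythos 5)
Your proof is correct and follows essentially the same route as the paper's: reuse $T^n_{1,\epsilon}$ and the counting bound $|T^n_{1,\epsilon}|<2^{n(H+\epsilon)}$ from Theorem~\ref{thm:refine}, build $f$ as an injection on $T^n_{1,\epsilon}$, and bound $e(f,\varphi)$ by $\mathbb{P}(\V{X}^n\notin T^n_{1,\epsilon})$ via \eqref{bit_left}. The only difference is that you spell out a few more details (the inequality $2^{n(H+\epsilon)}\leq 2^{\lceil n(H+\epsilon)\rceil}$ and the explicit identification of the event $\V{X}^n\notin T^n_{1,\epsilon}$ with the left tail) that the paper leaves implicit.
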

\begin{proof}
	Recall 
	\begin{align*}
	T^n_{1,\epsilon} = \left \{ \V{x}^n \in \mathcal{X}^n:   \frac{1}{n} \log_2 \frac{1}{P(\V{x}^n)} -H  < \epsilon    \right \}. 
	\end{align*}
	It have been proved that   $|T^n_{1,\epsilon}| < 2^{n(H+\epsilon)}$. Therefore, one can construct a mapping $f$ that is one-to-one from  $T^n_{1,\epsilon}$ to $f(T^n_{1,\epsilon})$ and pick  an arbitrary string from $\{0,1\}^{\lceil n(H+\epsilon) \rceil} \backslash f(T^n_{1,\epsilon})$  for all $\V{x}^n \notin T^n_{1,\epsilon}$. Then  by \eqref{bit_left},
	\begin{align*}
	e(f,\varphi) \leq \mathbb{P} \left ( \V{x}^n \notin T^n_{1,\epsilon} \right  ) \leq  \exp \left ( -\frac{n \epsilon^2}{4 (\log_2 K)^2} \right ). 
	\end{align*}
\end{proof}
\textit{Remark:} The error exponent of the block code for fixed $K$ has been obtained   through a combinatorial argument instead of the method of typical sequences  (Theorem 2.15 in \cite{csiszar2011information}), which has the form 
\begin{align}
\inf_{Q: H(Q)\geq H+\epsilon} D(Q||P), \label{KL}
\end{align}
where $D(Q||P)$ is  the Kullback–Leibler divergence between distribution $Q$ and $P$. The probability of error satisfies
\begin{align}
e(f,\varphi) \leq \exp \left \{  -n \left [ \inf_{Q: H(Q)\geq H+\epsilon} D(Q||P)- \frac{\log (n+1)}{n} K \right ]  \right \}. \label{classical_exp}
\end{align}
This error exponent is proved to be optimal under the condition that  $K$ is fixed and $n$ goes to infinity \cite{csiszar2011information}. When $K$ is growing, the error exponent is still achievable if $K\log (n+1)/n \rightarrow 0$, which holds under the condition $(K^2\log K)/n \rightarrow 0$.  Therefore, despite being a uniform bound for the probability of error,  Theorem \ref{thm:old} (\cite{zhao2020note}, Corollary 1) does not provide a better error exponent. 
However, when $K\log (n+1)/n= \Omega(1)$,  the error exponent \eqref{KL} might not be achieved and  \eqref{classical_exp} may even blow up. By contrast, the new result  still gives an exponential decay bound as long as $(\log K)^2/n \rightarrow 0$.

\section{Conclusion}
We proved a uniform concentration bound for the tail probability of log-likelihoods of discrete random variables. The key steps in the proof are to bound the variance of the log-likelihood $\textnormal{Var}( L(\V{z}_i))$ (Lemma \ref{control_var}) and the MGF $M_Y(\lambda,\V{p}_i)$ (Lemma \ref{thm:opt2}). We proved the two bounds by viewing them  as optimization problems and applying the primal-dual method. Essentially, we proved the duality gaps are zero under certain conditions by techniques in real analysis. Furthermore, we gave examples of the applications of the new result in information theory. 

One direction we are exploring is to generalize the result to discrete variables with countably infinite number of values. It is known that not every discrete variable with a countably infinite number of values has a finite entropy \cite{baccetti2013infinite}. Even within the class of distributions that have finite entropies, a uniform concentration bound over the class does not exist. The counterexample in Theorem \ref{thm:counter_eg} implies that the bound becomes trivial if the number of non-zero probabilities goes to infinity. Therefore, to figure out proper constraints to be put on the class is an intriguing question.  Another direction is to generalize the results to non-independent variables, such as martingales and weakly dependent variables. Moreover, researchers may be interested in applying the new result to other problems in information theory, for example, to variable-length codes and noisy channels, particularly in the high-dimensional setting. 

\appendix

\section*{Appendix}
\begin{lemma}\label{thm:tedious}
	Let  $\nu=-(\lambda+1)(1/K)^\lambda+\lambda(-\log K +1)$, and $f(p)= p^{\lambda+1}-\lambda  p \log p+\nu p$, $p\geq 0, \lambda>-1$.  Then for $K\geq 5$, $\lambda>-1$ and $\lambda \geq \frac{2-2/K-\log K}{(1-1/K) \log K }$, $f(1/K)\geq f(1)$ and $f(1/K) \geq f(0)$.
\end{lemma}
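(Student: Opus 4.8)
The plan is to reduce both inequalities to an explicit one-variable analysis in $\lambda$. First I would evaluate $f$ at the three relevant points. Using the conventions $0^{\lambda+1}=0$ and $0\log 0=0$ (valid for $\lambda>-1$), one gets $f(0)=0$; substituting the value of $\nu$ and simplifying yields $f(1/K)=\tfrac{\lambda}{K}\bigl(1-K^{-\lambda}\bigr)$ and $f(1)=1+\nu=1-(\lambda+1)K^{-\lambda}+\lambda-\lambda\log K$. The inequality $f(1/K)\ge f(0)=0$ is then immediate with no hypothesis beyond $\lambda>-1$ and $K\ge 2$: the factors $\lambda$ and $1-K^{-\lambda}$ always have the same sign (both $\ge 0$ for $\lambda\ge 0$, both $\le 0$ for $-1<\lambda\le 0$, since then $K^{-\lambda}\ge 1$), so their product is nonnegative.

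The substance is $f(1/K)\ge f(1)$. I would set $G(\lambda)=f(1/K)-f(1)$ and show $G(\lambda)\ge 0$ for $\lambda\ge\lambda_0:=\tfrac{2-2/K-\log K}{(1-1/K)\log K}$. Writing $a=K^{-\lambda}$, $L=\log K$, $c=1-1/K$, two differentiations give $G'(\lambda)=a(c-L-\lambda Lc)+(L-c)$ and $G''(\lambda)=L^{2}ca\,(\lambda-\lambda_0)$; the key observation is that the awkward threshold $\lambda_0$ in the hypothesis is exactly the root of $c-L-\lambda Lc=0$, hence the sign-change point of $G''$. Since $L^{2}ca>0$, $G$ is concave on $(-\infty,\lambda_0)$ and convex on $(\lambda_0,\infty)$. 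Two elementary facts then pin everything down: $G(0)=0$ and $G'(0)=0$ (both are manifest on setting $\lambda=0$ in the formulas above, where in fact $f\equiv 0$), and $\lambda_0<0$ whenever $K\ge 5$ — this holds because $h(K):=2-2/K-\log K$ is strictly decreasing for $K>2$ and $h(5)<0$, and this is precisely the point where the hypothesis $K\ge 5$ is used.

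Combining these: since $G''>0$ on $(\lambda_0,\infty)$ and $\lambda_0<0$, the derivative $G'$ is increasing on $[\lambda_0,\infty)$, so $G'\le G'(0)=0$ on $[\lambda_0,0]$ and $G'\ge G'(0)=0$ on $[0,\infty)$. Hence $G$ is nonincreasing on $[\lambda_0,0]$ and nondecreasing on $[0,\infty)$, and in either regime $G(\lambda)\ge G(0)=0$. This gives $f(1/K)\ge f(1)$ for all $\lambda\ge\lambda_0$; one also checks $\lambda_0>-1$ for $K\ge 2$, so the constraints $\lambda>-1$ and $\lambda\ge\lambda_0$ are compatible, which completes the lemma.

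I expect the only real obstacle to be bookkeeping: carrying out the simplification of $f(1/K)$ and the two differentiations of $G$ without sign errors, and recognizing that $\lambda_0$ is exactly the sign-change point of $G''$. Once that identification is made, everything reduces to the short monotonicity argument above, with $K\ge 5$ entering only through $\lambda_0<0$.
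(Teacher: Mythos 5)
Your proof is correct and follows essentially the same route as the paper: evaluate $f$ at $0,1/K,1$, reduce to showing $g(\lambda)=f(1/K)-f(1)\ge 0$, observe $g(0)=g'(0)=0$, compute $g''$ and show it is nonnegative exactly on $[\lambda_0,\infty)$ with $\lambda_0<0$ for $K\ge 5$. The only cosmetic difference is that you factor $g''(\lambda)=L^2ca(\lambda-\lambda_0)$ to exhibit $\lambda_0$ as the sign-change point directly, whereas the paper derives the same fact by rearranging the inequality $g''(\lambda)\ge 0$; both then conclude via convexity on $[\lambda_0,\infty)$ together with the stationary-zero at $\lambda=0$.
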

\begin{proof}

	It is easy to verify that
	\begin{align*}
	f(0) & =  0, \\
	f(1) & =  1+\nu =1-(\lambda+1)(1/K)^\lambda+\lambda(-\log K +1), \\
	f(1/K) &=  (1/K)^{\lambda+1}-\lambda (1/K) \log(1/K)+\nu (1/K)=(1/K) \lambda \left (1-(1/K)^\lambda \right )\geq 0 =f(0).
	\end{align*}
	Therefore, we only need to prove $f(1/K) \geq f(1)$. Let 
	\begin{align*}
	g(\lambda) = &f(1/K)-f(1) \\
	= & (\lambda-\lambda/K+1) \exp(-\lambda \log K)+\lambda(1/K+\log K-1)-1.
	\end{align*}
	Its first and second derivatives are
	\begin{align*}
	g'(\lambda) = & (1-1/K)\exp(-\lambda \log K)+(\lambda-\lambda/K+1) \exp(-\lambda \log K) (-\log K)+1/K+\log K-1, \\
	g''(\lambda)= & (1-1/K)\exp(-\lambda \log K) (-\log K)+(1-1/K-(\lambda-\lambda/K+1)(\log K)) \exp(-\lambda \log K)(-\log K).
	\end{align*}
	Note that $\frac{2-2/K-\log K}{(1-1/K) \log K } <0$ for $K\geq 5$, i.e., $0\in \left [\frac{2-2/K-\log K}{(1-1/K) \log K },\infty \right )$.	Moreover, it is easy to verify that $g(0)=0$ and $g'(0)=0$. 
	Therefore, to prove $g(\lambda)\geq 0$ for $\lambda \geq \frac{2-2/K-\log K}{(1-1/K) \log K }$, i.e., $g(0)$ is the minimum in that range, we only need to show  $g$ is convex, i.e., $g''(\lambda) \geq 0$ for $\lambda \geq \frac{2-2/K-\log K}{(1-1/K) \log K }$. In fact,
	\begin{align*}
	& g''(\lambda) \geq 0 \\
	\Leftrightarrow & 1-1/K+1-1/K-\log K-\lambda(1-1/K) \log K \leq 0 \\
	\Leftrightarrow & \lambda \geq \frac{2-2/K-\log K}{(1-1/K) \log K }. 
	\end{align*}		
\end{proof}

\section*{Acknowledgements}
This research was supported by the National Science Foundation grant DMS-1840203.

 \newcommand{\noop}[1]{}

\end{document}